\renewcommand\footnotemark{}
\numberwithin{equation}{section} 
\theoremstyle{plain}
\newtheorem{theorem}{Theorem}
\newtheorem{lemma}[theorem]{Lemma}
\newtheorem{proposition}[theorem]{Proposition}
\theoremstyle{definition}
\newcommand{\curv}{\mathcal{C}}
\newcommand{\tangent}{\! \! \! \! \phantom{n}^\top}
\newcommand{\core}{\mathrm{core}}
\newcommand{\bT}{{\mathbb T}}
\newcommand{\qf}{\mathrm{qf}}
\newcommand{\Conf}{\mathrm{Conf}}
\newcommand{\dee}{\mathrm{d}}
\newcommand{\diam}{\mathrm{diam}}
\newcommand{\ep}{\epsilon}
\newcommand{\co}{\colon\thinspace}
\newcommand{\RR}{\mathbb{R}}
\newcommand{\MM}{\mathbb{M}}
\newcommand{\CC}{\mathbb{C}}
\newcommand{\calA}{\mathcal{A}}
\newcommand{\calE}{\mathcal{E}}
\newcommand{\calG}{\mathcal{G}}
\newcommand{\calH}{\mathcal{H}}
\newcommand{\calN}{\mathcal{N}}
\newcommand{\calP}{\mathcal{P}}
\newcommand{\calX}{\mathcal{X}}
\newcommand{\calY}{\mathcal{Y}}
\newcommand{\calZ}{\mathcal{Z}}
\newcommand{\Teich}{\mathcal{T}}
\newcommand{\grad}{\nabla}
\newcommand{\infinity}{{\rotatebox{90}{\hspace{-.75pt}\large $8$}}}
\newcommand\ME{M\kern-4pt E}
\newcommand\bME{\overline{M\kern-4pt E}}
\newcommand\nn{{\mathbf n}}
\newcommand\vv{{\mathbf v}}
\newcommand\boundary{\partial}
\def\<#1,#2>{\langle#1,#2\rangle}
\def\<#1>{\langle#1\rangle}
\newcommand{\ssm}{\smallsetminus}
\newcommand{\bdot}[1]{\overset{\large\bm .}{#1}}
\begin{document}

\title{\textbf{Skinning bounds along thick rays}}
\author{Kenneth Bromberg,
 Autumn Kent,
 and Yair Minsky\thanks{The authors were supported by NSF grant DMS--1509171, NSF CAREER Award DMS--1350075, and NSF grants DMS--1311844 and DMS--1610827.}}
 \date{March 25, 2018}

\maketitle

\begin{abstract} 
\noindent We show that the diameter of the skinning map of an acylindrical hyperbolic $3$--manifold $M$ is bounded on $\epsilon$--thick Teichm\"uller geodesics by a constant depending only on $\epsilon$ and the topological type of $\partial M$.
\end{abstract}


\section{Introduction}
Let $M$ be a compact hyperbolic $3$--manifold with totally geodesic boundary $X_M$. 
The space of convex cocompact hyperbolic metrics on the interior $M^\circ$ of $M$ is naturally identified with the Teichm\"uller space $\Teich(\boundary M)$.   Given a convex cocompact hyperbolic metric $M^X$ on $M^\circ$ associated to the marked Riemann surface $X$, the conformal boundary of $M^X$ is $X$.  The covering of $M^\circ$ corresponding to $\boundary M$ is a quasifuchsian manifold whose conformal boundary has two components: one conformally equivalent to $X$,  the other the \textit{skinning surface} $\sigma_M(X)$.  This defines a map between Teichm\"uller spaces
\[
\sigma_M \co \Teich(\boundary M) \to \Teich(\overline{\boundary M})
\]
called the \textit{skinning map}. 
See \cite{Kent.2010} for more details.

Thurston's Bounded Image Theorem \cite{Thurston.1979.Bangor} states that
\[
\diam(\sigma_M(\Teich(\boundary M))) < \infinity,
\]
and is instrumental in his proof of hyperbolization for Haken
$3$--manifolds. 
Quantitative bounds on the diameter of this map would improve our understanding of the gluing of hyperbolic structures.
One may conjecture that there is a bound
\[
\diam(\sigma_M(\Teich(\boundary M))) < {\mathcal D}
\]
where $\mathcal D$ depends only on the topological type of $\boundary M$ and not on $M$ itself.  
Our theorem supports this conjecture.

\begin{theorem}[Bound along thick rays]\label{BoundAlongThickRays.theorem}
Let $S$ be a closed orientable surface of genus greater than 1 and let $\ep > 0$.
Then there is a $\mathcal{D}$ such that if $M$ is any compact hyperbolic $3$--manifold with totally geodesic boundary $X_M \cong S$ and $\calG \co [0,\infinity) \to \Teich(\partial M)$ is any $\epsilon$--thick Teichm\"uller geodesic ray with $\calG(0) = X_M$, then
\[
\diam(\sigma_M(\calG([0,\infinity)))) \leqslant \mathcal{D}.
\]
Specifically, there are constants $A$ and $B$ depending only on $S$ and $\epsilon$ such that 
\[
\diam(\sigma_M(g([T,\infinity)))) < A e^{-B T}$ for all $T \ge 0.
\]
\end{theorem}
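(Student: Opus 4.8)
The plan is to bound the speed of the path $t\mapsto\sigma(t):=\sigma_M(\calG(t))$ and integrate: since $\diam(\sigma_M(\calG([T,\infty))))$ is at most the length of $\sigma|_{[T,\infty)}$, it suffices to prove $\|\sigma'(t)\|\le A'e^{-B't}$ with $A',B'$ depending only on $S$ and $\epsilon$. So I want to estimate the differential of the skinning map at $\calG(t)$ applied to the unit tangent $\dot\calG(t)$ of the ray. The tool is the description of $\sigma_M$ through the quasifuchsian cover $\hat M^X\to M^X$ corresponding to $\pi_1(\partial M)$: because $M$ is acylindrical this cover is quasifuchsian, with conformal boundary $X\sqcup\sigma_M(X)$, the $X$--component lifting isometrically onto the geometrically finite end of $M^X$; and $d\sigma_M|_X(v)$ is obtained by taking the Beltrami differential of $v$ on $X$, extending it $\pi_1(M)$--equivariantly over the sphere at infinity (equivalently, as a harmonic strain field over $\hat M^X$), and recording its restriction to the $\sigma_M(X)$--component. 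This restriction is the image of $v$ under the Poincar\'e (``theta'') operator attached to $\pi_1(\partial M)\hookrightarrow\pi_1(M)$, whose nontrivial cosets are realised by elements translating points a definite distance across the convex core; so the size of $d\sigma_M|_{\calG(t)}(\dot\calG(t))$ is controlled by how far, geometrically, the $\sigma_M(\calG(t))$--side of $\hat M^{\calG(t)}$ lies from the support of the deformation.

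The engine that turns distance into decay is geometric inflexibility (McMullen), in quantitative form: a deformation supported at the conformal boundary of a hyperbolic $3$--manifold, extended as a harmonic strain field, has norm $\lesssim e^{-d}$ at distance $d$ into the convex core, with universal implied constant and with the relevant sup--norm here equal to $1$ (the ray has unit speed). Applied to $\hat M^{\calG(t)}$ this bounds the refracted Beltrami differential on the $\sigma_M(\calG(t))$--component by $A_0e^{-w_t}$, where $w_t$ is, up to bounded additive error, the width of the convex core of $\hat M^{\calG(t)}$ --- the distance one must cross from the $\calG(t)$--side to the $\sigma_M(\calG(t))$--side. (Making this precise includes checking that the translated copies of the domain of discontinuity of $\pi_1(M)$ which appear inside the core --- the ``tendrils'' onto which the lifted deformation also spreads --- are themselves far, in the conformally weighted sense governing the strain field, from the $\sigma_M(\calG(t))$--component.) Granting this, $\|\sigma'(t)\|\le A_0e^{-w_t}$, and the theorem reduces to the geometric fact that $w_t$ grows linearly in $t$.

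So the heart of the matter is the bound $w_t\ge\lambda t-c$ with $\lambda,c$ depending only on $S$ and $\epsilon$. The two conformal boundary data of $\hat M^{\calG(t)}$ are $\calG(t)$ and the mirror $\overline{\sigma_M(\calG(t))}$, both in $\Teich(S)$. Since $\partial M$ is totally geodesic in $M^{X_M}$, the cover $\hat M^{X_M}$ is Fuchsian, so $\sigma_M(\calG(0))=\overline{X_M}$ and $\overline{\sigma_M(\calG(0))}=\overline{\overline{X_M}}=X_M$; therefore, writing $D(t):=d_\Teich(\sigma(t),\sigma(0))$, the triangle inequality through $X_M$ together with the fact that $\calG$ is a unit--speed geodesic from $X_M$ gives $d_\Teich(\calG(t),\overline{\sigma_M(\calG(t))})\ge t-D(t)$. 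On the other hand a quasifuchsian manifold whose convex core has width $w$ has its two conformal boundaries $K(w)$--quasiconformally equivalent (via a pleated--surface sweepout of the core together with Sullivan's comparison between conformal boundary and convex core boundary), so $d_\Teich(\calG(t),\overline{\sigma_M(\calG(t))})\le Cw_t+C'$; combining, $w_t\ge\lambda(t-D(t))-c$. Feeding this into $D'(t)=\|\sigma'(t)\|\le A_0e^{-w_t}$ yields the differential inequality $D'(t)\le A_0e^{c}e^{-\lambda t}e^{\lambda D(t)}$, which a continuity/Gronwall bootstrap --- seeded by the elementary fact that the holomorphic map $\sigma_M$ is $1$--Lipschitz for the Teichm\"uller metric, so $D'\le1$, and if necessary supplemented by a compactness argument forbidding unbounded skinning displacement along $\epsilon$--thick rays --- forces $D(t)$ to stay bounded by some $D_0=D_0(S,\epsilon)$. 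Then $w_t\ge\lambda t-\lambda D_0-c$, hence $\|\sigma'(t)\|\le A_0e^{\lambda D_0+c}e^{-\lambda t}$ and $\diam(\sigma_M(\calG([T,\infty))))\le (A_0e^{\lambda D_0+c}/\lambda)\,e^{-\lambda T}$, which is the desired $Ae^{-BT}$. Throughout, the $\epsilon$--thickness of $\calG$ is used to keep the relevant surfaces --- the convex--core boundary of $M^{\calG(t)}$, and the pleated surfaces appearing in the sweepouts --- uniformly thick, so that the inflexibility constant, Sullivan's comparison, and the sweepout estimates all depend only on $S$ and $\epsilon$.

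I expect the main obstacle to be the linear lower bound $w_t\gtrsim t$, together with the need to make the bootstrap close with constants depending on $S$ and $\epsilon$ alone. The width estimate really requires controlling the geometry of $M^{\calG(t)}$ and of its $\pi_1(\partial M)$--cover along an $\epsilon$--thick Teichm\"uller ray --- for instance through bounded subsurface projections and a bilipschitz model for geometrically finite thick ends, or through a carefully built sweepout --- and then arguing that the $\sigma_M(\calG(t))$--side of the cover genuinely sits beyond this whole wide region. The accompanying analytic subtlety is the ``tendril'' issue flagged above: one must show that the lifted deformation, although not literally confined to the $\calG(t)$--conformal boundary, is still felt only exponentially weakly (in $w_t$) on the $\sigma_M(\calG(t))$--component. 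Finally, since Thurston's bounded image theorem produces a bound depending on $M$, the uniformity of $D_0$ --- and hence of $A$ and $B$ --- in $S$ and $\epsilon$ must be obtained either from a self--improving form of the differential inequality or from the compactness argument, rather than quoted.
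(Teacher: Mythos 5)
Your outline shares the paper's skeleton (exponential decay of the skinning speed via inflexibility, driven by a linearly growing separation inside the convex core, then integrate), but the two load-bearing steps are gaps as written. First, the linear separation. The paper gets $d_\Teich(X_t,Y_t)\ge(1-c_S)t$ immediately from McMullen's \emph{uniform} contraction theorem, $\|\dee\sigma_M\|_\Teich\le c_S<1$ with $c_S$ depending only on $S$ (Theorem \ref{McMullen.contraction.theorem}; the uniformity over all acylindrical $M$ with $\partial M\cong S$ requires revisiting McMullen's limiting argument, which the paper does in a remark). Your substitute is the bootstrap $D'(t)\le K e^{-\lambda(t-D(t))}$ seeded only by the $1$--Lipschitz bound $D'\le 1$; that seed gives only $D(t)\le t$, which makes the inequality vacuous, and the differential inequality self-improves to a bounded $D$ only when $K=A_0e^{c}<1$, which you have no way to arrange. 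Your fallback, a ``compactness argument forbidding unbounded skinning displacement along $\epsilon$--thick rays,'' is not an available statement --- it is essentially the theorem being proved (Thurston's Bounded Image Theorem only gives an $M$--dependent bound). So the engine of your proof, $w_t\gtrsim t$, is missing an input that the paper supplies by uniform contraction.

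Second, even granting linear growth of $d_\Teich(X_t,Y_t)$, your conversion to core width via ``$d_\Teich(X,Y)\le Cw+C'$ for quasifuchsian manifolds'' is false in general: if $Y$ differs from $X$ by a high power of a Dehn twist, the core width stays bounded while $d_\Teich(X,Y)$ is arbitrarily large. It holds only under thickness hypotheses on $\qf(X_t,Y_t)$, which you do not have a priori ($Y_t$ is unknown). The paper does something weaker and sufficient: using contraction, Minsky's projection bound and Rafi's thin-triangle theorem it shows only that an \emph{initial} segment $[X_t,Z_t]$ of $[X_t,Y_t]$ of length $\gtrsim t$ is uniformly thick (Lemma \ref{thick.segment.lemma}), converts that segment via the Brock--Canary--Minsky bilipschitz model into an $\epsilon_2$--thick product collar of depth $\gtrsim t$ on the $\calX_t$--side, and embeds it in $\core(M_t)$ (Theorem \ref{thick.collar.theorem}). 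Relatedly, your appeal to inflexibility with a ``universal'' constant ignores the thin-part issue: McMullen's version needs a global injectivity radius bound, and Brock--Bromberg's pointwise bounds hold at thick points, measured from $M_t\setminus\core(M_t)$. This is why the paper must also prove that peripheral curves never get short along the family (Theorem \ref{only.nonperipheral.thin}), and why it avoids your ``tendril'' problem altogether by running Reimann's harmonic strain field on $M_t$ itself and reading off the motion of $Y_t$ from an immersed locally convex proxy surface deep in $\core(M_t)$ via the Krasnov--Schlenker formula (Lemma \ref{conformal.structure.infty}, Proposition \ref{control.skinning}), rather than by a theta-operator computation in the quasifuchsian cover. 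The items you flag as ``main obstacles'' are exactly these missing steps, and they constitute the bulk of the actual proof.
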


\subsubsection*{Sketch of the proof}

The idea of the proof is as follows, see Figure \ref{collar.figure}.

\begin{figure}
\input{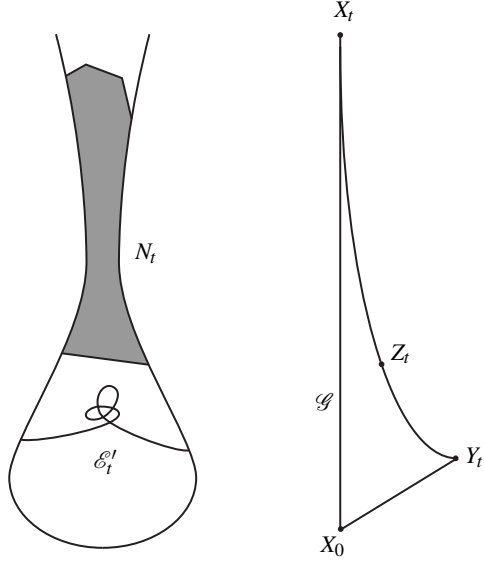}
\caption{At left is the manifold $M^{X_t}$, with surface $\calE'_t$ and collar $N_t$ in the convex core about the convex core boundary.  At right is the geodesic triangle $\triangle X_0 Y_t X_t$.}\label{collar.figure}
\end{figure}

Let $X_t$ be the surfaces along the geodesic ray, let $Y_t$ be the mirror image of the skinning surface at $X_t$, and let $M_t = M^{X_t}$ be the interior of $M$ equipped with the hyperbolic metric corresponding to $X_t$.
McMullen proved \cite{McMullen.1990} that the skinning map of an acylindrical manifold is uniformly contracting, and this means that the distance between $X_t$ and $Y_t$ is growing at a definite linear rate. 
The geodesic $[X_t, Y_t]$ from $X_t$ to $Y_t$ fellow travels our geodesic $\calG$ along a thick segment $[X_t, Z_t]$ of linearly growing length, thanks to work of Rafi \cite{Rafi.2014}.
This implies, using work of Brock--Canary--Minsky
\cite{Brock.Canary.Minsky.ELC2}, the existence of a linearly deep and
uniformly thick collar about the convex 
hull boundary of $M_t$. We establish this in Theorem
\ref{thick.collar.theorem} in Section \ref{thick.collar.section}.

In Section \ref{Geometric.inflexibility.section}, we use the Geometric
Inflexibility Theorem of Brock--Bromberg \cite{Brock.Bromberg.2011}.
This tells us that, in the complement of the thick collars of Theorem
\ref{thick.collar.theorem}, the geometry of the manifold is changing, in a $C^1$--sense, at
a rate exponentially small in $t$. 
(Here the metric distortion is measured in terms of the strain field of the family of metrics.)

We formulate two consequences of this.
Theorem \ref{eta.C1.control} gives the pointwise $C^1$ estimates in the form
that we will use. 
Theorem \ref{only.nonperipheral.thin} uses an additional estimate from
\cite{Brock.Bromberg.2011} to show 
that every peripheral curve in $M$ has an absolute lower bound on its
geodesic length along the family $M_t$.

In Lemma \ref{first.time.lemma} of Section \ref{proxy.section} we show that,
for sufficiently large $t$, there is a surface $\calE'_t$ below the
deep collar in $M_t$ that serves as a proxy for the skinning surface
$Y_t$. The surface $\calE'_t$ is the immersion in $M_t$ of a suitably smoothed
neighborhood of the convex hull boundary facing the skinning end in
the quasifuchsian cover of $M_t$.

In Lemma \ref{conformal.structure.infty} and Proposition
\ref{control.skinning} we study the relation between $\calE'_t$ and the
skinning image $Y_t$, and use it to show that the speed of $Y_t$ in
Teichm\"uller space is controlled by the $C^1$ bounds on the strain field
established in Theorem \ref{eta.C1.control}.
The surface $\calE'_t$ is uniformly thick by Theorem \ref{only.nonperipheral.thin}, and so we can apply Theorem \ref{eta.C1.control} to see that $Y_t$ moves
exponentially slowly.
It follows that the distance between $Y_0$ and $Y_t$ is uniformly bounded for all $t$.

\section{Constants, norms, and families of metrics}
\label{families.section}

Throughout the paper we will want to keep track of the dependence of constants.  
To simplify our notation we will say constants are \textit{nice} if they depend only on $\epsilon$ and the topological type of $S$.

\subsection*{Norms on tensors}
Let $V$ be a finite dimensional vector space with an inner product $g$. 
Since $V$ is finite dimensional, all norms are equivalent, and we use the {\em operator norm}. Let $x$ be a vector in $V$. Then $\|x\|^2 = g(x,x)$, and for a $(r,0)$--tensor $\tau$ on $V$, we define
\[
\|\tau\| = \underset{\|x_i\|=1}{\sup} |\tau(x_1, \dots, x_r)|.
\]
If $\tau$ is an $(r,1)$--tensor, we define
\[
\|\tau\| = \underset{\|x_i\|=1}{\sup} \|\tau(x_1, \dots, x_r)\|.
\]
If $\tau$ is an $(r,0)$-- or $(r,1)$--tensor on a Riemannian manifold $M$ then we have an operator norm $\|\tau_p\|$ at each point $p$ and we define $\|\tau\| =\sup_{p\in M}  \|\tau_p\|$.

\subsection*{Families}
If we have a $1$--parameter family of objects $ob_t$ then we write $ob = ob_0$, and $\bdot{ob}$ will denote the time--zero derivative.

\subsubsection*{Families of Riemannian metrics}
Given a smooth $1$--parameter family of Riemannian metrics $g_t$ on a manifold $M$, there is, at each time $t$, a vector--valued $1$--form $\eta_t$ defined by 
\[
\frac{\partial}{\partial t} g_t(x,y) = 2 g(\eta_t(x),y))
\]
called the \textit{strain field} at time $t$ associated to the family $g_t$.

\subsubsection*{Families of conformal structures}

A family of metrics on a surface determines a family of marked
conformal structures. Such a family is a path in Teichm\"uller
space, and we are interested in bounding the Teichm\"uller norm of its
derivative in terms of the derivative of the metrics. Namely we will
show
\begin{lemma}\label{bound.Teich.norm}
Let $g_t$ be a smooth family of Riemannian metrics on a surface
$\Sigma$ and $X_t$ the corresponding marked conformal structures in
$\Teich(\Sigma)$. Then
$$
\|\bdot X\|_\Teich \le 2\|\bdot g\|.
$$
\end{lemma}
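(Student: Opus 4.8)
The plan is to work at a single point $p\in\Sigma$ and compare the infinitesimal deformation of the conformal structure induced by $\dot g$ with the Teichmüller (i.e. $L^\infty$ on Beltrami differentials) norm. The key point is that the Teichmüller norm of $\dot X$ is the $L^\infty$ norm of a Beltrami differential $\mu$ representing the infinitesimal change of conformal structure, and $\|\mu\|_\infty$ is bounded by the pointwise size of the trace-free part of $\dot g$ measured in $g$, which in turn is at most $\|\dot g\|$.

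More concretely, first I would fix a point $p$ and choose $g$-conformal coordinates $z=x+iy$ near $p$, so that $g = \rho\,|dz|^2$ at $p$ (we only need this at the single point $p$, so no integrability issue arises — we may even arrange $\rho(p)=1$ after scaling, which does not affect either side since both are pointwise-conformally-natural). In such coordinates write the symmetric $2$-tensor $\dot g$ as $\dot g = a\,dx^2 + 2b\,dx\,dy + c\,dy^2$. The trace (with respect to $g$) is $a+c$, and the trace-free part corresponds to the quadratic differential $q = \tfrac14\big((a-c) - 2ib\big)\,dz^2$; the associated Beltrami differential is $\mu = \bar q/\rho\, |dz|^2$-normalized, i.e. $\mu = \tfrac{1}{\rho}\cdot\tfrac14\big((a-c)+2ib\big)\,\frac{d\bar z}{dz}$. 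Then I would compute $|\mu(p)|$ directly: $|\mu(p)| = \tfrac{1}{4}\big|(a-c)+2ib\big|$ (using $\rho(p)=1$), and bound this by $\tfrac12\|\dot g_p\|_g$ by evaluating $\dot g_p$ on the unit vectors $\partial_x,\partial_y$ and $(\partial_x\pm\partial_y)/\sqrt2$ and using the definition of the operator norm. Taking the supremum over $p$ gives $\|\mu\|_\infty \le \tfrac12\|\dot g\|$.

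The remaining step is to relate $\|\dot X\|_\Teich$ to $\|\mu\|_\infty$. The path $t\mapsto X_t$ in $\Teich(\Sigma)$ has tangent vector at $t=0$ represented (via the standard identification of $T_X\Teich$ with Beltrami differentials modulo $\bar\partial$ of vector fields) by the Beltrami differential $\mu$ above; the Teichmüller norm on this tangent space is $\inf$ over representatives of the $L^\infty$ norm. Hence $\|\dot X\|_\Teich \le \|\mu\|_\infty \le \tfrac12\|\dot g\| \le 2\|\dot g\|$ — in fact this argument gives the sharper constant $\tfrac12$, but the stated bound $2\|\dot g\|$ certainly follows, with room to spare, which is all that is needed downstream. (If one prefers the extremal-length/Kerckhoff formulation of the Teichmüller norm, the same bound follows from the fact that an $L$-quasiconformal deformation distorts extremal lengths by at most $L^{\pm1}$, and $\dot g$ being bounded in operator norm by $K$ means the identity is infinitesimally $(1+O(K))$-quasiconformal between $g$ and $g+t\dot g$.)

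I expect the only mild subtlety — not really an obstacle — to be bookkeeping the precise normalization constant relating the trace-free part of a symmetric $2$-tensor, the quadratic differential, and the Beltrami differential, and making sure the operator-norm conventions from Section \ref{families.section} line up; since the target constant $2$ is not sharp, this can be handled loosely. A second point to state carefully is that we only ever need $g$-conformal coordinates at a single point at a time, so no appeal to existence of global isothermal coordinates or to any regularity of the family beyond smoothness is required; the estimate is genuinely pointwise and then supremized.
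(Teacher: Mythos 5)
Your proposal is correct and is essentially the paper's own argument: a pointwise linear-algebra computation (the paper works with a $g_0$--orthonormal basis of a $2$--dimensional vector space rather than conformal coordinates at $p$, but it is the same calculation) identifying the infinitesimal Beltrami coefficient with the traceless part of $\bdot g$, followed by the bound $\|\bdot X\|_\Teich \le \|\bdot \mu\|_\infty$ obtained by differentiating the quasiconformal description of Teichm\"uller distance. Your sharper constant $\tfrac12$ also checks out (the paper's displayed relation $|\bdot\mu| = 2\|[\bdot g]\|$ seems to have the factor inverted, the correct relation being $|\bdot\mu| = \tfrac12\|[\bdot g]\|$), but either way the stated inequality $\|\bdot X\|_\Teich \le 2\|\bdot g\|$ holds.
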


\begin{proof}
  The proof is just a calculation. 
We begin with the case of a $2$--dimensional vector space.

Let $\Conf(V)$ be the space of conformal structures on an oriented
vector space $V$. If $V$ is $2$--dimensional then this can be
identified with orientation preserving $\RR$--linear maps from $V$ to
$\CC$ where two such maps are equivalent if they differ by
post-composition with a $\CC$--linear map. 
Given two conformal structures $\omega_0$, $\omega_1$ in $\Conf(V)$, we define the {\em
  Teichm\"uller distance} between them as follows. Let
$\lambda_0,\lambda_1\colon V\to \CC$ be  $\RR$--linear maps
representing $\omega_0$ and $\omega_1$ and let $\mu =
\frac{(\lambda_1\circ\lambda_0^{-1}))_{\bar
    z}}{(\lambda_1\circ\lambda_0^{-1})_z}$ be the {\em Beltrami
  differential}. Then $d_\Teich(\omega_0,\omega_1) =
\frac12\log\frac{1+|\mu_\lambda|}{1-|\mu_\lambda|}$. 
Note that while $\mu$ depends on the choice of $\lambda_0$ (but not $\lambda_1$), the absolute value $|\mu|$ only depends on $\omega_0$ and $\omega_1$ so $d_\Teich$ is well
defined and one can check that it is a metric.

Let $\omega_t$ be a smooth family in $\Conf(V)$ with a smooth family of representatives $\lambda_t$ and $\mu_t$ the Beltrami differentials between $\lambda_0$ and $\lambda_t$. 
A computation shows that the time zero derivative of the map $t\mapsto d_\Teich(\omega_0,\omega_t)$ is bounded by $|\bdot \mu|$.

An inner product $g$ determines a conformal structure by choosing an
$\RR$--linear map $\lambda\colon V\to \CC$ to be an orientation
preserving isometry from $(V,g)$ to the usual Euclidean metric on
$\CC$. Note that if we multiply $g$ by a scalar we get an equivalent
conformal structure. Now take a smooth family $g_t$ of inner products
and isometries $\lambda_t\colon (V,g_t) \to\CC$. 
If we choose an orthonormal basis for $(V,g_0)$ taken by $\lambda_0$ 
to the standard basis of $\CC$, then the traceless part $[\bdot g]$ of $\bdot g$ is represented by the matrix
\[
[\bdot g] =2\left(\begin{array}{cc} \Re \bdot \mu & \Im \bdot \mu\\ \Im \bdot \mu & - \Re \bdot \mu \end{array} \right).
\]
Another direct computation gives
\begin{equation}\label{mu.dot}
|\bdot\mu| = 2\|[\bdot g]\| \le 2\|\bdot g\|.
\end{equation}

A Riemannian metric $g$ on a surface $\Sigma$ defines a conformal
structure on each tangent space and this defines a conformal structure
on $\Sigma$, and hence a point in $\Teich(\Sigma)$. 
Given a diffeomorphism $f\colon(\Sigma, g_0) \to (\Sigma, g_1)$, the
pointwise identification $df_p\colon T_p\Sigma\to T_{f(p)}\Sigma$
allows us to compare the conformal structures as above, and in
particular to define a Beltrami differential $\mu_f$ whose absolute
value $|\mu_f|$ is well-defined independently of coordinates. We can
then write 
\begin{equation}\label{Teich.defn}
d_\Teich(g_0,g_1) = \underset{f\in{\mathrm {Diff}}_0(\Sigma)}{\inf}
\frac12\log\frac{1+\|\mu_f\|_\infty}{1-\|\mu_f\|_\infty}
\end{equation}
where $\mathrm{Diff}_0(\Sigma)$ is the space of diffeomorphisms of
$\Sigma$ isotopic to the identity. This defines a pseudometric on the
space of Riemannian metrics on $\Sigma$. The quotient metric space is
the Teichm\"uller space $\Teich(\Sigma)$, which can be given a
differentiable structure so that $d_\Teich$ is a Finsler metric,
associated to a norm $\|\cdot\|_\Teich$ on the tangent space.
If $g_t$ is a smooth family of Riemannian metrics on $\Sigma$, and $X_t$
are the corresponding marked conformal structures in $\Teich(\Sigma)$,
then by differentiating (\ref{Teich.defn}) we obtain 
$\| \bdot X\|_\Teich \le \|\bdot\mu\|_\infty$.
Now using (\ref{mu.dot}) we complete the proof of Lemma \ref{bound.Teich.norm}.
\end{proof}

\section{Thick collar}
\label{thick.collar.section}

In this section we let $X_t$ and $M_t$ be as in the
introduction. Let $\core(M_t)$ denote the convex core of $M_t$. 
Our goal is the following statement:

\begin{theorem}[Thick collar in $\core(M_t)$]\label{thick.collar.theorem}
There exists $t_1>0$ and $\delta>0$ depending on $S,\ep$ such that for $t>t_1$ there is
a collar neighborhood of $\boundary\core(M_t)$ in $\core(M_t)$ which
is $\ep_2$--thick and contains a $\delta t$--neighborhood of the
boundary. 
\end{theorem}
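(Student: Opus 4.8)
The plan is to transfer the thick-fellow-traveling of the Teichm\"uller geodesic $\calG$ by the segment $[X_t, Z_t]$ into a geometric statement about the convex core $\core(M_t)$, via the model-manifold technology of Brock--Canary--Minsky. First I would make precise the statements invoked in the sketch. McMullen's contraction theorem for the skinning map of an acylindrical manifold gives a uniform constant $c<1$ with $d_\Teich(\sigma_M(X), \sigma_M(X')) \le c\, d_\Teich(X,X')$; applied along $\calG$ this shows that the mirrored skinning surface $Y_t$ satisfies $d_\Teich(X_t, Y_t) \ge (1-c)\,d_\Teich(X_0,X_t) - O(1) \asymp t$, so the Teichm\"uller geodesic $[X_t,Y_t]$ has length growing linearly in $t$. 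Next, by Rafi's theorem on the combinatorial/geometric structure of Teichm\"uller geodesics (together with $\epsilon$--thickness of $\calG$), the initial segment of $[X_t,Y_t]$ of length $\asymp t$ stays within bounded distance of $\calG$ and is itself $\epsilon_2$--thick for a nice constant $\epsilon_2$: this is the segment $[X_t,Z_t]$, and $\ell([X_t,Z_t]) \ge \delta' t$ for a nice $\delta'>0$ and $t$ large.

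The core step is then to feed this into the ending lamination machinery. The quasifuchsian cover of $M_t$ corresponding to $\partial M$ is the quasifuchsian manifold $Q(X_t, \overline{\sigma_M(X_t)}) = Q(X_t, Y_t)$, and $\core(M_t)$ is covered (near its boundary facing $X_t$) by $\core(Q(X_t,Y_t))$. The model manifold of Brock--Canary--Minsky for $Q(X_t,Y_t)$ is built from the Teichm\"uller (or curve-complex) data of the geodesic $[X_t,Y_t]$: a long thick segment of that geodesic forces a correspondingly long product region $S \times [0, L]$ with $L \asymp \ell([X_t,Z_t]) \asymp t$, on which the model metric is uniformly bilipschitz to the product of a fixed thick hyperbolic metric on $S$ with an interval. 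By the bilipschitz model theorem this product region maps, by a uniformly bilipschitz map, into $Q(X_t,Y_t)$, and hence this gives a uniformly thick collar neighborhood of the $X_t$--facing component of $\partial \core(Q(X_t,Y_t))$ whose width is $\asymp t$. Pushing this down to $M_t$ (the covering map is a local isometry) yields the desired $\epsilon_2$--thick collar of $\partial\core(M_t)$ containing a $\delta t$--neighborhood of the boundary.

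The main obstacle I anticipate is the bookkeeping at the interface between Rafi's theorem and the model manifold: one must ensure that the thick segment $[X_t,Z_t]$ genuinely registers in the \emph{combinatorial} data (e.g. that the curve complex geodesic between the end invariants of $Q(X_t,Y_t)$ has a long initial portion supported by the subsurface $S$ itself, with all nearby subsurface projections bounded), so that the bilipschitz model really does contain the long product region, and that the width of the resulting collar in $\core(M_t)$ is bounded below by a \emph{definite linear} function of $t$ rather than merely going to infinity. One also has to be a little careful that the collar lies \emph{inside} the convex core, which follows because the product region sits between the $X_t$--facing boundary of the convex hull and the ``middle'' of the manifold, but this should be handled by the standard description of the convex core boundary in the model. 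Once the model-manifold comparison is set up, the thickness of the collar is immediate from thickness of the model, and the depth estimate is immediate from the length estimate on $[X_t,Z_t]$; the constants $t_1$ and $\delta$ are nice because every ingredient (McMullen's $c$, Rafi's constants, the Brock--Canary--Minsky bilipschitz constant) depends only on the topological type of $S$ and on $\epsilon$.
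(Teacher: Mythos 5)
Your outline follows the same route as the paper: McMullen's uniform contraction gives $d_\Teich(X_t,Y_t)\ge(1-c)t$ (using $Y_0=X_0$), coarse hyperbolicity in the thick part (the paper uses Minsky's Lipschitz projection and fellow--traveling theorems together with Rafi's thin--triangle theorem) produces a thick initial segment $[X_t,Z_t]$ of $[X_t,Y_t]$ of length $\gtrsim t$, and the Brock--Canary--Minsky bilipschitz model converts this, via bounded subsurface projections (Rafi's thick--geodesic bound plus the Bounded Geodesic Image Theorem), into an $\ep_2$--thick product region of width $\gtrsim t$ adjacent to $\calX_t$ in $\core(\qf(X_t,Y_t))$. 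Up to this point your plan and your list of anticipated difficulties match the paper's Lemmas on the thick segment and the collar in the quasifuchsian manifold.

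The genuine gap is the final sentence, ``pushing this down to $M_t$ (the covering map is a local isometry).'' The covering $\pi\co\qf(X_t,Y_t)\to M_t$ has infinite degree, and a local isometry gives neither that the product region maps injectively into $\core(M_t)$ nor that its image is $\ep_2$--thick: injectivity radius can drop under a covering projection, since a short essential loop of $M_t$ through the image need not lift to a loop in the cover. The paper devotes a separate lemma (the ``Embedding of collar'' lemma) to exactly this: every component of $\pi^{-1}(\pi(\calX_t))$ other than $\calX_t$ misses the interior of $\core(X_t,Y_t)$ and is separated from $\calX_t$ by the product region $B_t$, hence lies at distance at least $\delta_2 t$ from $\calX_t$; therefore $\pi$ is injective on a $(\delta_2 t/2)$--neighborhood of $\calX_t$, and once $\delta_2 t/6 > \ep_2 + 2D$ any loop of length $\ep_2$ based in the image of $B_t[\delta_2 t/3]$ lifts to a loop inside the embedded region, so thickness of $B_t$ passes down to $M_t$. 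Note that it is precisely the linear depth of the collar in the cover that makes this descent work, so the step is not routine bookkeeping; without an argument of this kind your proof ends one step short of the statement about $\core(M_t)$.
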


\subsubsection*{Uniform contraction}

McMullen showed \cite{McMullen.1990} that if $M$ is a compact hyperbolic $3$--manifold with totally geodesic boundary, then $\sigma_M$ is uniformly contracting.
Namely, if $\dee \sigma_M$ is the derivative of $\sigma_M$ and $\| \,
\dee\sigma_M \, \|_\Teich$ its Teichm\"uller norm as in Section \ref{families.section}, then
\[
\| \, \dee\sigma_M \, \|_\Teich < c_M < 1
\]
over the entire Teichm\"uller space for some constant $c_M$ depending on $M$.
Remarkably, the proof provides uniform contraction independent of $M$.
\begin{theorem}[McMullen \cite{McMullen.1990}]\label{McMullen.contraction.theorem}
There is a constant $c_S$ such that if $M$ is any hyperbolic $3$--manifold with totally geodesic boundary $\partial M \cong S$, then 
\[
\| \, \dee \sigma_M \, \|_\Teich < c_S < 1
\]
at every point in $\Teich(S)$.
\end{theorem}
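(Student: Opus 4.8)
The plan is to pass from $\dee\sigma_M$ to its adjoint, identify the adjoint with a Poincar\'e theta-series operator on integrable holomorphic quadratic differentials, and then bound that operator's norm uniformly. First I would set up the analytic picture. Fix $X\in\Teich(S)$ and let $\Gamma=\rho_X(\pi_1 M)$ uniformize $M^X$, with domain of discontinuity $\Omega$ and limit set $\Lambda$. Since $M$ is acylindrical (having geodesic boundary) and $M^X$ is convex cocompact, each component of $\Omega$ is a Jordan domain, $\Gamma$ permutes the components $\{D_i\}$ transitively, and $\Lambda=\hat\CC\setminus\bigsqcup_i D_i$ is a Sierpi\'nski carpet. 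Fix $D_0$ and set $H=\stab_\Gamma(D_0)\cong\pi_1(\boundary M)$; then $\HH^3/H$ is the quasifuchsian manifold of the statement, $\Omega_H=D_0\sqcup\Omega_H^-$ with $\Omega_H^-=\hat\CC\setminus\overline{D_0}$, and $\sigma_M(X)=\Omega_H^-/H$. The cotangent space to $\Teich$ at a point is the space $Q(\cdot)$ of integrable holomorphic quadratic differentials, carrying the $L^1$ norm dual to the Teichm\"uller norm on the tangent space. A tangent vector at $X$ is represented by the $\Gamma$-invariant lift $\tilde\mu$ to $\Omega$ of a Beltrami differential, and the description of $\sigma_M$ via the second conformal boundary component of the cover \cite{Kent.2010} gives $\dee\sigma_M[\tilde\mu]=[\,\tilde\mu|_{\Omega_H^-}]$. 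A short pairing computation then identifies the adjoint $(\dee\sigma_M)^{*}\colon Q(\sigma_M X)\to Q(X)$ with the theta operator
\[
\beta_M\phi \;=\; \sum_{H\gamma\in H\backslash\Gamma}\gamma^{*}\psi,
\]
where $\psi\in L^1(\Omega)$ is the $H$-invariant holomorphic lift of $\phi$ to $\Omega_H^-$ extended by zero across $\overline{D_0}$; classical convergence of Poincar\'e series makes $\beta_M\phi$ holomorphic and integrable. As an operator and its adjoint have equal norms, it suffices to bound $\|\beta_M\|$.

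Taking a fundamental domain $P\subset D_0$ for $\Gamma$ on $\Omega$ (equivalently for $H$ on $D_0$), the translates $\gamma P$ over $H\backslash\Gamma$ tile a fundamental domain for $H$ on $\Omega$, whence
\[
\|\beta_M\phi\|_1=\int_P\Big|\sum_{H\gamma}\gamma^{*}\psi\Big|\le\sum_{H\gamma}\int_{\gamma P}|\psi|=\int_{\Omega/H}|\psi|=\|\phi\|_1,
\]
the last equality using that $\Lambda$ has measure zero ($M$ has no cusps). So $\|\beta_M\|\le 1$. Equality in the triangle inequality forces all nonzero terms $\gamma^{*}\psi$ to be positive real multiples of a fixed one a.e.\ on $P$; transported back these are restrictions of a single holomorphic differential on the connected domain $\Omega_H^-$ obeying an overdetermined system of $\Gamma$-equivariances, and by McMullen's analysis of Poincar\'e series \cite{McMullen.1990}---using that the coset space $H\backslash\Gamma$ is non-amenable, $H$ being an infinite-index quasiconvex surface subgroup of the word-hyperbolic group $\pi_1 M$---the only solution is $\phi=0$. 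This gives $\|\beta_M\|<1$ pointwise.

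The main obstacle is the uniformity: a bound $\|\beta_M\|\le c_S<1$ with $c_S$ depending only on the genus of $S$, as $M$ ranges over all manifolds with $\boundary M\cong S$ and $X$ over all of $\Teich(S)$. I would make the amenability step quantitative, extracting a deficit $1-\|\beta_M\|\ge\kappa$ where $\kappa$ is a Cheeger/Poincar\'e-type constant for the Schreier coset graph $H\backslash\Gamma$---a measure of the forced cancellation in the theta series coming from the nesting of the disks $D_i$---and argue that $\kappa$ is bounded below purely in terms of the genus, since $\boundary M$ is a fixed surface and the covering $\Omega/H\to X$ has fiber $H\backslash\Gamma$. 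Alternatively one can argue by compactness: were there no such $c_S$, take $M_n,X_n$ with $\|\beta_{M_n}\|_{X_n}\to 1$, recenter and pass to a geometric limit of the $\Gamma_n$, and show the normalized extremal differentials converge to a nonzero differential of operator norm $1$ on the limit, after checking the limiting group still has non-amenable coset data---contradicting the pointwise bound. The two delicate points in either route are the degeneration of $X_n$ to $\boundary\Teich(S)$, where the surface pinches and the contribution of thin collars to $\|\beta_M\|$ must be shown negligible (exactly the sort of estimate supplied elsewhere in this paper by the inflexibility and ELC technology), and the fact that $M$ itself varies over infinitely many topological types. I expect controlling the thin parts to be the genuine difficulty; once that is handled a limiting argument closes the gap, which is in essence the estimate carried out in \cite{McMullen.1990}.
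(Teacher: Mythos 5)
Your analytic setup---identifying $(\dee\sigma_M)^{*}$ with the relative Poincar\'e series operator $\beta_M$ on $Q(\sigma_M(X))$, the unfolding argument giving $\|\beta_M\|\le 1$, and strict inequality at each fixed $X$ via non-amenability of $H\backslash\Gamma$ together with finite-dimensionality of $Q$---is exactly McMullen's framework and is fine as far as it goes. But it only yields the pointwise statement $\|\dee\sigma_M\|_\Teich<1$, which is not the theorem: the content here is the single constant $c_S<1$ valid over all of $\Teich(S)$ \emph{and} over all $M$ with $\partial M\cong S$, and at precisely that step your write-up stops, offering two routes and deferring both. The first route---a Cheeger-type constant of the Schreier graph $H\backslash\Gamma$ bounded below in terms of the genus, converted into a uniform deficit $1-\|\beta_M\|\ge\kappa$---does not work as stated: the norm of the theta operator is not a function of the combinatorics of the coset graph but of the conformal placement of the disks $\gamma D_i$, which varies with $X$ and $M$, and McMullen's amenability theorem is qualitative (norm $=1$ iff the cover is amenable), not a quantitative spectral bound of this kind. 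The second route, geometric limits of a sequence of potential counterexamples, is the correct one---it is how McMullen proves uniform contraction in Theorem 6.1 of \cite{McMullen.1990}---but you leave its two delicate points unresolved, and your suggestion that degeneration of $X_n$ in Teichm\"uller space would be handled by ``the inflexibility and ELC technology supplied elsewhere in this paper'' is circular: the present theorem is an input to that machinery, not a consequence of it. What actually closes McMullen's argument is the structure of geometric limits of acylindrical convex cocompact groups (the closures of the components of $\Omega$ are pairwise disjoint, so the limit is again a nonelementary, non-amenable covering situation, and the amenability theorem forces norm $<1$ in the limit, contradicting $\|\beta_{M_n}\|\to 1$); no thick-thin analysis of $X_n$ is needed.

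For comparison, the paper does not reprove the statement at all: it cites \cite{McMullen.1990} and records only the observation that the proof of Theorem 6.1 there uses nothing about $M$ beyond compactness, irreducibility, acylindricity, atoroidality and boundary incompressibility, so the same geometric-limit argument goes through when the manifolds in the counterexample sequence are allowed to vary while the topological type of $\partial M$ is held fixed. That observation---uniformity in $M$, not merely in $X$---is the one genuinely new point, and it is exactly the point your proposal leaves open.
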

\begin{proof}[Remark on the proof.]
The proof of uniform contraction in \cite{McMullen.1990} makes very
little use of the topology of the $3$--manifold $M$, and relies only
on the facts that $M$ is compact, irreducible, acylindrical,
atoroidal, and boundary incompressible. The main argument, in the
proof of Theorem 6.1 of \cite{McMullen.1990}, obtains uniform
contraction by considering the possible geometric
limits of a sequence of potential counterexamples. This argument works
just as well if one allows the underlying $3$--manifolds to vary over the
sequence while fixing the topological type of the boundary.
\end{proof}

\subsubsection*{Thick segments}

Let $X_t = \calG(t)$ be as in the hypothesis of the main theorem, and let
$Y_t = \overline{\sigma_M(X_t)}$.  
Note that $Y_0 = \overline{\sigma_M(X_0)} = X_0$.

Theorem \ref{McMullen.contraction.theorem} bounds the speed
\[
\| \, Y'_t \, \|_\Teich < c < 1,
\]
where $c = c_S$ depends only on $S$.  
We conclude that
\begin{equation}\label{slowskins.equation}
\dee(Y_0,Y_t) \leq c \, t
\end{equation}
and so
\[
\dee(X_t,Y_t) \geq (1-c) t
\]
by the triangle inequality.

For $X$, $Y$ in $\Teich(S)$, let $[X,Y]$ denote the Teichm\"uller geodesic
between them.

To produce our thick collar, we begin by showing that the geodesic $[X_t, Y_t]$ has an initial segment $[X_t, Z_t]$ that is $\epsilon_1$--thick for a nice $\epsilon_1$.
To do this, we use the coarse hyperbolicity that Teichm\"uller space exhibits in its thick part.
Theorem \ref{lipschitz.projection.theorem} below, due to Minsky, says that thick geodesics have coarsely Lipschitz closest points projections.  
Theorem \ref{Rafi.thick.thin.theorem} below, due to Rafi, says that geodesic triangles in $\Teich(S)$ try to be thin triangles when they are in the thick part. 
That is, a point in a long thick segment in the side of triangle is close to the union of the other two sides.
Together, these theorems tell us that, since $Y_t$ is far from $X_t$, the geodesic $[X_t,Y_t]$ must fellow travel $[X_t,X_0]$ for a long time.

We now make this precise.

\begin{theorem}[Minsky, Second part of Corollary 4.1 of \cite{Minsky.1996}]\label{lipschitz.projection.theorem}
Let $\epsilon > 0$ and let $S$ be a closed orientable surface.
There is a constant $b$ such that if $\calG$ is an $\epsilon$--thick geodesic in $\Teich(S)$ with closest points projection map $\pi_\calG$, then
\[
\diam(\pi_\calG(X)\cup \pi_\calG(Y)) \leq \dee(X,Y) + b
\]
for any points $X$ and $Y$ in $\Teich(S)$. \qed
\end{theorem}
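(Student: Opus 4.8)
The plan is to reduce the statement to two sub-bounds and extract both from a single geometric input.

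\textbf{Step 0 (reduction).} It suffices to produce nice constants $b_0,b_1$ with (a) $\diam\pi_\calG(Z)\le b_0$ for every $Z\in\Teich(S)$, and (b) $\dee(p,q)\le\dee(X,Y)+b_1$ for all $p\in\pi_\calG(X)$ and $q\in\pi_\calG(Y)$. Indeed, given two points of $\pi_\calG(X)\cup\pi_\calG(Y)$, they either lie in a common fibre — hence at distance $\le b_0$ — or one lies over $X$ and the other over $Y$, in which case (b) applies; so the theorem holds with $b=\max(b_0,b_1)$.

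\textbf{Step 1 (the real content): a uniform divergence estimate.} Parametrize $\calG$ by arclength. The input I would isolate is: there is a nice constant $b_2$ so that for every $Z\in\Teich(S)$, every $t$, and every $t_p$ with $\calG(t_p)\in\pi_\calG(Z)$,
\[
\dee(\calG(t),Z)\ \ge\ |t-t_p|+\dee(\calG(t_p),Z)-b_2 .
\]
This is precisely the assertion that $\calG$ is strongly contracting: off the thick tube around $\calG$, Teichm\"uller geodesics aimed down $\calG$ fellow–travel for a definite time before fanning apart, so that the thick part of $\Teich(S)$ behaves, along $\calG$, like a Gromov–hyperbolic space — the product-region phenomena responsible for the non-hyperbolicity of $\Teich(S)$ being confined to its thin part. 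The one-variable model is a geodesic line in $\HH^2$, where right-angle hyperbolic trigonometry gives the displayed inequality with the universal constant $\log 4$; the substance of Minsky's extremal length and product region estimates is that the same holds, with constants depending only on $S$ and $\epsilon$, along any $\epsilon$–thick geodesic. I expect this step to be the main obstacle; the remainder is formal.

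\textbf{Step 2 (deducing (a) and (b)).} For (a): if $\calG(t_p),\calG(t_{p'})\in\pi_\calG(Z)$, applying the estimate at $t=t_{p'}$ and using $\dee(\calG(t_{p'}),Z)=\dee(Z,\calG)=\dee(\calG(t_p),Z)$ forces $|t_{p'}-t_p|\le b_2$, so $\diam\pi_\calG(Z)\le b_2$. For (b): write $p=\calG(t_p)\in\pi_\calG(X)$ and $q=\calG(t_q)\in\pi_\calG(Y)$. Applying the divergence estimate to $X$ at the parameter $t_q$ together with the triangle inequality,
\[
\dee(X,Y)\ \ge\ \dee(X,\calG(t_q))-\dee(Y,\calG(t_q))\ \ge\ |t_q-t_p|+\dee(X,p)-\dee(Y,q)-b_2 ,
\]
and applying it instead to $Y$ at the parameter $t_p$ gives the symmetric bound $\dee(X,Y)\ge|t_q-t_p|+\dee(Y,q)-\dee(X,p)-b_2$. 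Averaging the two inequalities cancels $\dee(X,p)$ and $\dee(Y,q)$, leaving $\dee(X,Y)\ge|t_q-t_p|-b_2=\dee(p,q)-b_2$. Thus (b) holds with $b_1=b_2$, and the theorem holds with $b=b_2$. (Alternatively one can run a telescoping argument directly from the ``bounded shadow'' form of contraction, subdividing a geodesic from $X$ to $Y$ into steps of size $\sim\max(1,\dee(\cdot,\calG))$; that route readily yields $\dee(p,q)\le A\,\dee(X,Y)+B$, but pushing the multiplicative constant down to $1$ forces one to separate the near region $\{\dee(\cdot,\calG)\le R\}$ from the far region with $R$ comparable to the contraction constant, a bookkeeping the divergence estimate above sidesteps.)
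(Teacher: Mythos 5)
This theorem is not proved in the paper at all: it is imported verbatim from Minsky (Corollary 4.1 of \cite{Minsky.1996}), where it is deduced from the Contraction Theorem, the main result of that paper. Measured against that, your Steps 0 and 2 are correct, and the averaging trick is a clean way to get the multiplicative constant $1$: granting the divergence estimate of Step 1, the bounded-fibre bound $\diam\pi_\calG(Z)\le b_2$ and the bound $\dee(p,q)\le\dee(X,Y)+b_2$ both follow exactly as you say.

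The genuine gap is Step 1, and you have flagged it yourself. The inequality $\dee(\calG(t),Z)\ge |t-t_p|+\dee(\calG(t_p),Z)-b_2$ for \emph{arbitrary} $Z\in\Teich(S)$ is not a weaker or more elementary input than the statement to be proved: it implies it in a few lines (your Step 2), and it is essentially Minsky's Contraction Theorem in a strengthened divergence/Morse form. So the proposal reduces one unproved result of \cite{Minsky.1996} to another of at least equal depth, which is a reorganization rather than a proof. Two further cautions. First, the $\HH^2$ computation carries no weight here, since the whole difficulty is that $Z$ may lie in the thin part, where Teichm\"uller space contains sup-metric product regions and behaves nothing like a negatively curved space; that the constants survive depends entirely on Minsky's extremal-length estimates, which you invoke but do not use. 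Second, even granting the contraction theorem in its usual form (projections of balls far from $\calG$ have diameter bounded by a nice constant), your divergence estimate does not follow formally: one needs the additional bounded-geodesic-image/Morse-type lemma that a geodesic from $Z$ to a point of $\calG$ passes uniformly close to $\pi_\calG(Z)$, and the naive telescoping argument only yields the inequality with a multiplicative loss. So ``the remainder is formal'' is true only after Step 1 is granted in exactly the strong form you state; as a self-contained argument the proposal is conditional, and the appropriate resolution (as in the paper) is to cite Minsky.
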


\begin{theorem}[Rafi, Theorem 8.1 of \cite{Rafi.2014}]\label{Rafi.thick.thin.theorem}
Let $\epsilon > 0$ and let $S$ be a closed hyperbolic surface. 
There are constants $A$ and $B$ such that the following holds.
Let $X$, $Y$, and $Z$ be three points in $\Teich(S)$.
If $[C,D] \subset [X,Y]$ such that $d(C,D) > A$ and every $t$ in $[C,D]$ is $\epsilon$--thick, then there is a $w$ in $[C,D]$ with
\[
\min\{ \dee(w, [X,Z]), \dee(w, [Y,Z]) \} \leq B. 
\quad \quad \quad \quad \quad  \quad \quad \quad \quad \quad \quad \quad \quad \quad \quad \quad \ \,
\qed
\]
\end{theorem}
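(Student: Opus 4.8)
The plan is to deduce the statement from the \emph{strong contraction} of closest--point projection onto $\ep$--thick Teichm\"uller geodesics, the companion property to Theorem~\ref{lipschitz.projection.theorem}. The input I would invoke is: there is a constant $B_0$, depending only on $\ep$ and $S$, such that if $\gamma$ is an $\ep$--thick geodesic segment in $\Teich(S)$ and $U,V\in\Teich(S)$ satisfy $d(\pi_\gamma(U),\pi_\gamma(V))>B_0$, then every point of the subsegment $[\pi_\gamma(U),\pi_\gamma(V)]\subset\gamma$ lies within $B_0$ of the geodesic $[U,V]$. This is the standard metric consequence --- contracting sets are Morse --- of the fact, proved by Minsky \cite{Minsky.1996} and exploited by Rafi \cite{Rafi.2014}, that closest--point projection to an $\ep$--thick geodesic is coarsely contracting. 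In $\Teich(S)$ that coarse contraction is the substantive content; what follows is formal.

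First I would record an order--theoretic observation: with $\gamma=[C,D]$, one has $\pi_\gamma(X)=C$ and $\pi_\gamma(Y)=D$. Indeed, since $[C,D]\subset[X,Y]$ we may assume the labelling is such that $X,C,D,Y$ occur in this order along $[X,Y]$; then for $q\in\gamma$ we have $d(X,q)=d(X,C)+d(C,q)\geq d(X,C)$ with equality only at $q=C$, so $C$ is the closest point of $\gamma$ to $X$, and symmetrically $D$ is the closest point to $Y$. Next let $E=\pi_\gamma(Z)\in\gamma$. Since $E$ lies on the geodesic segment $[C,D]$ we get $d(C,E)+d(E,D)=d(C,D)>A$, so one of $d(C,E),d(E,D)$ exceeds $A/2$; interchanging the roles of $X$ and $Y$ if necessary --- legitimate since the hypotheses and the conclusion are symmetric under that swap --- I may assume $d(C,E)>A/2$.

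I would then take $A=2B_0+1$ and $B=B_0$. With this choice $d(\pi_\gamma(X),\pi_\gamma(Z))=d(C,E)>A/2>B_0$, so the strong--contraction input applies to $U=X$, $V=Z$ and gives that every point of $[C,E]\subset[C,D]$ lies within $B_0$ of $[X,Z]$. Letting $w$ be the midpoint of $[C,E]$ (any point of $[C,E]$ works), we obtain $w\in[C,D]$ with $d(w,[X,Z])\leq B_0=B$, which is the assertion; in the complementary case the same argument with $[Y,Z]$ and $[E,D]$ in place of $[X,Z]$ and $[C,E]$ applies.

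The main obstacle is the quoted strong--contraction statement itself; everything else is bookkeeping. Were one to reprove it rather than cite it, the argument would run as follows: the $\ep$--thickness of $\gamma$ forces, via Rafi's characterization of the short curves of a Teichm\"uller geodesic, every proper--subsurface and annular coefficient along $\gamma$ to be uniformly bounded, so that through Rafi's combinatorial model for the Teichm\"uller metric any displacement along $\gamma$ is faithfully recorded by the shadow of $\gamma$ to the curve complex $\calC(S)$; one then brings in the Gromov hyperbolicity of $\calC(S)$ together with Minsky's product--region estimates, bootstrapping through the subsurface curve complexes --- in effect an induction on the complexity of $S$ --- to pass from curve--complex contraction back to a genuine Teichm\"uller bound. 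I expect that to be where all the real difficulty lies, with the deduction above being a short formal consequence.
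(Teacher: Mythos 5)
The paper does not actually prove this statement --- it is quoted verbatim from Rafi (Theorem 8.1 of \cite{Rafi.2014}) and used as a black box --- so the only comparison available is with your proposed alternative derivation, which reduces the theorem to a strong--contraction (Morse--type) property of projections onto $\epsilon$--thick geodesic segments. Your bookkeeping is correct: since $[C,D]\subset[X,Y]$, additivity of distance along a Teichm\"uller geodesic does give $\pi_{[C,D]}(X)=C$ and $\pi_{[C,D]}(Y)=D$; the point $E=\pi_{[C,D]}(Z)$ splits $[C,D]$ so that one of $d(C,E)$, $d(E,D)$ exceeds $A/2$; and the symmetry in $X,Y$ makes the reduction to the quoted input legitimate. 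Note also that you correctly project onto the thick \emph{segment} $[C,D]$ rather than onto $[X,Y]$, which is essential since nothing is assumed about the rest of $[X,Y]$.

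The genuine weak point is the input itself and its attribution. What Minsky proves in \cite{Minsky.1996} is the Contraction Theorem (balls disjoint from a thick geodesic have projections of uniformly bounded diameter), the coarse--Lipschitz property quoted in this paper as Theorem \ref{lipschitz.projection.theorem}, and stability of quasigeodesics whose \emph{endpoints are joined by a thick geodesic} (Theorem \ref{Minsky.fellow.travelers.theorem}); none of these is literally the statement you quote, namely that whenever $d(\pi_\gamma(U),\pi_\gamma(V))>B_0$ the entire segment $[\pi_\gamma(U),\pi_\gamma(V)]$ lies in the $B_0$--neighborhood of $[U,V]$. That statement is of Morse type for points \emph{off} $\gamma$, and deriving it from ball--contraction is not as formal as you suggest: the naive ball--chaining argument only shows that a geodesic sweeping its projection by $L$ while staying at distance $\rho$ from $\gamma$ has length at least roughly $\rho L/D$, and when $d(U,\gamma)+d(V,\gamma)$ is much larger than $L$ this leaves open a closest approach of order $D\bigl(d(U,\gamma)+d(V,\gamma)\bigr)/L$, which is unbounded. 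Closing this (e.g.\ via the observation that contraction forces $d(U,V)\geq\max\bigl(d(U,\gamma),d(V,\gamma)\bigr)-O(1)$ once the projections are more than $D$ apart, followed by a genuine descent-and-sweep argument, as in the contracting-implies-Morse literature) is exactly where the content of the theorem lives; since Teichm\"uller space is not Gromov hyperbolic you cannot appeal to thin triangles to finish. So as written your argument has outsourced the substance of Rafi's theorem to a lemma that is true and available in the contracting-geodesics literature, but is neither stated in \cite{Minsky.1996} nor a purely formal consequence of it; to make the proof complete you must either cite a precise reference for that lemma (with constants depending only on the contraction constant, hence only on $\epsilon$ and $S$) or prove it, at which point your route becomes comparable in length and depth to Rafi's own.
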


\begin{theorem}[Minsky, Theorem 4.2 of \cite{Minsky.1996}]\label{Minsky.fellow.travelers.theorem}
Let $K, C, \epsilon > 0$.  
Then there is a $D > 0$ such that the following holds. Let $\calP$ be a $(K,C)$--quasigeodesic path in $\Teich(S)$ whose endpoints are connected by an $\epsilon$--thick Teichm\"uller geodesic $\calG$. 
Then $\calP$ remains in a $D$--neighborhood of $\calG$.
\qed
\end{theorem}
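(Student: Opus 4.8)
The plan is to prove this as a \emph{Morse--type stability} statement: along the $\epsilon$--thick locus $\Teich(S)$ behaves like a Gromov hyperbolic space, so a $(K,C)$--quasigeodesic whose endpoints are joined by a thick geodesic is forced to fellow--travel that geodesic, exactly as in the Morse lemma. The genuinely Teichm\"uller--theoretic ingredient is a \emph{strong contraction} estimate for the closest point projection $\pi_\calG$ onto an $\epsilon$--thick geodesic $\calG$: besides Theorem \ref{lipschitz.projection.theorem} one uses the companion half of Corollary 4.1 of \cite{Minsky.1996} --- proved there by means of the product--region description of the thin part of $\Teich(S)$ --- which furnishes a constant $\calC=\calC(S,\epsilon)$ with $\diam\big(\pi_\calG(\ball(p,r))\big)\le\calC$ whenever $r\le\dee(p,\calG)$; equivalently, balls disjoint from $\calG$ have uniformly bounded image under $\pi_\calG$. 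This property, which fails for projection onto an arbitrary geodesic, is precisely the coarse hyperbolicity of the thick part, and it is the only input needed beyond Theorem \ref{lipschitz.projection.theorem}.

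First I would tame $\calP$: replacing each unit sub--arc of $\calP$ by the Teichm\"uller geodesic between its endpoints changes $(K,C)$ into nearby constants $(K',C')$, fixes the endpoints and hence $\calG$, and produces a continuous $(K',C')$--quasigeodesic, which I parametrize proportionally to arc length so that it is $K'$--Lipschitz; its domain being compact, the function $s\mapsto\dee(\calP(s),\calG)$ attains a maximum. Next I would check that $\pi_\calG\circ\calP$ is a $D_0$--coarse path in $\calG$ for a constant $D_0$ depending only on $S,\epsilon,K,C$: for $|s-s'|\le 1$ the points $\calP(s),\calP(s')$ lie within $K'+C'$ of each other, so either both lie within $K'+C'$ of $\calG$ --- in which case Theorem \ref{lipschitz.projection.theorem} bounds $\dee\big(\pi_\calG\calP(s),\pi_\calG\calP(s')\big)$ --- or the ball $\ball(\calP(s),K'+C')$ is disjoint from $\calG$ and the contraction estimate bounds it. Since the endpoints of $\calP$ lie on $\calG$ and are coarsely fixed by $\pi_\calG$, this coarse path runs from one endpoint of $\calG$ to the other, so its image is a $D_0$--coarse net in $\calG$.

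The conclusion then follows from the standard fact that a geodesic admitting a strongly contracting closest point projection is Morse. Concretely, let $R=\max_s\dee(\calP(s),\calG)$, attained at $s_0$, and set $p=\calP(s_0)$; assume $R$ exceeds all the constants produced so far. Let $a\le s_0\le b$ be, respectively, the least and greatest parameters at which $\pi_\calG\calP$ lies within $4R$ of $\pi_\calG(p)$ along $\calG$. Since every point of $\calP$ lies within $R$ of $\calG$, one gets $\dee(\calP(a),\calP(b))\le 12R$, so the quasigeodesic inequality bounds the length of $\calP|_{[a,b]}$ by a constant multiple of $R$. On the other hand, because $\calP|_{[a,b]}$ reaches distance $R$ from $\calG$ while its endpoints and --- by coarse Lipschitzness of $\pi_\calG$ --- its projection all stay within a bounded multiple of $R$ of a common point of $\calG$, one can cover the portions of it that are deep in the complement of $\calG$ by balls disjoint from $\calG$ and chain the contraction bound along these to see that $\calP|_{[a,b]}$ must in fact be long relative to $R$ --- this is exactly the superlinear divergence along $\calG$ through which strong contraction yields the Morse property. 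For $R$ large the two length estimates are incompatible, so $R\le D$ for a $D$ depending only on $S,\epsilon,K,C$, which is the assertion. (Rafi's Theorem \ref{Rafi.thick.thin.theorem} is a more quantitative expression of the same coarse hyperbolicity, and could be substituted for the contraction estimate in essentially the same argument.)

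The main obstacle is the strong contraction estimate itself --- that balls disjoint from an $\epsilon$--thick Teichm\"uller geodesic have uniformly bounded image under the closest point projection. This is the substantive part, and it is where one must invoke Minsky's extremal--length computations and the product--region structure of $\Teich(S)$ from \cite{Minsky.1996}; it is the natural companion to the Lipschitz bound quoted here as Theorem \ref{lipschitz.projection.theorem}. Once that estimate is granted, the remainder is routine coarse geometry, the only care being to keep $D_0$ and $D$ dependent on nothing beyond $S$, $\epsilon$, $K$, and $C$.
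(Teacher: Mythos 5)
The paper does not prove this statement at all---it is quoted verbatim from Minsky, Theorem 4.2 of \cite{Minsky.1996}---so the question is whether your argument stands on its own. Your overall route is in fact Minsky's own: the substantive input is his Contraction Theorem (the companion half of Corollary 4.1, proved by quadratic-differential/extremal-length estimates rather than the product-regions theorem, but that is only a mis-attribution), and the rest is coarse geometry. Your taming of $\calP$ and the coarse-path property of $\pi_\calG\circ\calP$ are fine (the latter already follows from Theorem \ref{lipschitz.projection.theorem} alone, since that bound applies to arbitrary pairs of points). The gap is in the final ``Morse'' step, and it is genuine.

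In your contradiction scheme both estimates on $\calP|_{[a,b]}$ are linear in $R$ and are not incompatible. Chaining balls of radius comparable to $R$ gives an \emph{upper} bound on the displacement of $\pi_\calG$ in terms of the length of the deep portion of the path; it yields a lower bound on that length only if you already know the projection moves a definite amount \emph{while the path is deep}. Nothing in your configuration forces this: the path can run essentially straight out to depth $R$ and straight back (length about $2R$, projection displacement negligible there), while the $\approx 3R$ of projection motion that your choice of $a$ and $b$ does force can occur while the path is close to $\calG$, where by Theorem \ref{lipschitz.projection.theorem} it costs only linearly in $R$. So ``long relative to $R$'' is never established and no contradiction appears; a priori a $(K,C)$--quasigeodesic of the kind you describe is consistent with both of your inequalities for every $R$. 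The standard repair is not to scale the excursion radius with $R$ but to fix a threshold $R_0\approx K'^2\calC$ and consider a maximal parameter interval $[a,b]$ on which $d(\calP(s),\calG)>R_0$: its endpoints lie within $R_0$ of $\calG$ (or are endpoints of $\calP$, which lie on $\calG$), chaining radius-$R_0$ balls gives $d(\calP(a),\calP(b))\le 2R_0+\calC\bigl(\mathrm{length}/R_0+1\bigr)$, and the quasigeodesic inequality then gives $\mathrm{length}\le K'^2(2R_0+C'+\calC)+\tfrac12\,\mathrm{length}$ by the choice of $R_0$; hence every such excursion has length at most a constant $L_0=L_0(K,C,\calC)$, and $\calP$ stays within $D=R_0+L_0$ of $\calG$. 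With that substitution your proposal becomes a correct proof, matching Minsky's original argument.
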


\begin{lemma}[Big thick segment]\label{thick.segment.lemma}
There exist $\ep_1>0$ and $\delta_1 > 0$ depending only on $S$ and $\epsilon$ such that the segment $[X_t, Y_t]$ contains a point $Z_t$ such that
\[
\dee(X_t, Z_t) > \delta_1 t
\]
and $[X_t, Z_t]$ is $\ep_1$--thick. 
\end{lemma}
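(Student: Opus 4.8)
The plan is to combine the three cited results---Minsky's Lipschitz projection theorem (\ref{lipschitz.projection.theorem}), Rafi's thick-triangle theorem (\ref{Rafi.thick.thin.theorem}), and Minsky's fellow-traveler theorem (\ref{Minsky.fellow.travelers.theorem})---with the linear lower bound $\dee(X_t,Y_t)\ge (1-c)t$ coming from McMullen contraction. The geometric picture is the triangle $\triangle X_0 X_t Y_t$: the side $[X_0,X_t]$ is the $\epsilon$-thick ray $\calG$, so it has $b$-Lipschitz closest-point projection $\pi_\calG$, and by \eqref{slowskins.equation} the vertex $Y_t$ is close to $X_0$ relative to how far $X_t$ is from both. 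First I would locate a long thick subsegment of $[X_t,Y_t]$ near $X_t$: intuitively $[X_t,Y_t]$ leaves $X_t$ heading ``up'' the ray $\calG$, so I would apply Theorem \ref{Rafi.thick.thin.theorem} to $\triangle X_0 X_t Y_t$ to find a point $w$ on $[X_t,Y_t]$ within bounded distance of $[X_0,X_t]=\calG$ or of $[X_0,Y_t]$; then iterate/bootstrap to push $w$ as far from $X_t$ as possible, obtaining an $\epsilon_1$-thick segment $[X_t,Z_t]$ of length at least $\delta_1 t$.

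More carefully, the key steps are: (1) Use Theorem \ref{lipschitz.projection.theorem} with $\calG$ the ray: for any $X$ the projection $\pi_\calG(X)$ is within $\dee(X,X_0)+b$ of $\pi_\calG(X_0)=X_0$, so in particular $\pi_\calG(Y_t)$ lies within $ct+b$ of $X_0$ along $\calG$, i.e.\ it is roughly $(1-c)t$ below $X_t$ along $\calG$. (2) Consider the concatenation $[X_t,Y_t]\cup[Y_t,X_0]$; this is a path from $X_t$ to $X_0$ whose endpoints are joined by the $\epsilon$-thick geodesic $\calG|_{[0,t]}$, but it is not a priori a quasigeodesic. Instead I would argue directly: if $[X_t,Y_t]$ had no long thick subsegment near $X_t$, then either it enters the thin part early or it diverges from both $\calG$ and $[X_0,Y_t]$; the first is controlled because short curves on a Teichm\"uller geodesic have a well-understood ``active interval'' (Rafi), and $X_t$ itself is $\epsilon$-thick with $X_0$ also $\epsilon$-thick, so any short curve appearing on $[X_t,Y_t]$ near $X_t$ would have to become short on $\calG$ near $X_t$, contradiction for the portion of $\calG$ that is $\epsilon$-thick. (3) Combining (1) and (2) via Theorem \ref{Rafi.thick.thin.theorem}: on the side $[X_t,Y_t]$, a long initial thick segment must $B$-fellow-travel $[X_t,X_0]=\calG$ (it cannot fellow-travel $[X_0,Y_t]$ for long without $Y_t$ being farther from $X_0$ than $ct$). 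This forces the thick part of $[X_t,Y_t]$ emanating from $X_t$ to shadow $\calG$ down to roughly $\pi_\calG(Y_t)$, hence for length $\gtrsim (1-c)t - O(1)$; set $\delta_1 = (1-c)/2$, say, to absorb additive errors for $t$ large, and shrink $\delta_1$ or restrict to $t>t_1$ to handle small $t$.

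I expect the main obstacle to be step (2)--(3): making rigorous the claim that the thick part of $[X_t,Y_t]$ near $X_t$ genuinely fellow-travels $\calG$ rather than peeling off toward $[X_0,Y_t]$, and controlling where the thin parts of $[X_t,Y_t]$ can occur. The clean way is to run Theorem \ref{Rafi.thick.thin.theorem} not once but on nested subsegments, using that $\calG$ is $\epsilon$-thick \emph{everywhere} so the ``near $\calG$'' alternative automatically keeps us in the thick part, and to use Theorem \ref{lipschitz.projection.theorem} to rule out the ``near $[X_0,Y_t]$'' alternative except within bounded distance of the endpoints $X_0$ and $Y_t$. Since $\dee(X_t,\pi_\calG(Y_t))\ge (1-c)t - b$, this yields a thick segment of $[X_t,Y_t]$ of length at least a definite fraction of $t$ minus a nice constant; choosing $\epsilon_1$ to be the minimum of $\epsilon$ and the thick constant produced by Rafi's and Minsky's theorems, and $\delta_1$ a suitable fraction of $1-c$, completes the proof. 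Theorem \ref{Minsky.fellow.travelers.theorem} is held in reserve in case one prefers to first promote $[X_t,Y_t]\cup[Y_t,X_0]$ to a quasigeodesic path and deduce it stays near $\calG$ directly; either route gives the lemma.
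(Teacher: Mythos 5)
Your overall strategy --- use Theorem \ref{lipschitz.projection.theorem} to show $\pi_\calG(Y_t)$ stays within $ct+b$ of $X_0$, use a thin-triangle statement to force $[X_t,Y_t]$ back to $\calG$ far from $X_t$, and get thickness from fellow-traveling --- is the paper's strategy in spirit, but the two steps you yourself flag as the ``main obstacle'' are exactly where the write-up has genuine gaps, and the mechanisms you propose for them do not work. First, your use of Theorem \ref{Rafi.thick.thin.theorem} is oriented backwards: as stated, the thick segment $[C,D]$ must lie on the side containing the output point $w$, so to ``find a point $w$ on $[X_t,Y_t]$ within bounded distance of $\calG$ or of $[X_0,Y_t]$'' you would already need a long thick segment of $[X_t,Y_t]$ --- which is precisely what the lemma is trying to produce. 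The correct (and the paper's) orientation puts the thick segment on $\calG$ itself: take $p$ a nearest point on $\calG$ to $Y_t$ (so $d(X_0,p)\le ct+b$ and $d(p,X_t)\ge(1-c)t-b$), let $m$ be the midpoint of $[p,X_t]$, check via the projection/nearest-point bounds that $d\bigl(m,[p,Y_t]\bigr)\gtrsim (1-c)t/4$, and apply Rafi to the triangle $\triangle pX_tY_t$ with the thick segment a piece of $\calG$ around $m$; since the $[p,Y_t]$ alternative is excluded for large $t$, $m$ lies within a nice constant $B$ of a point $Z_t\in[X_t,Y_t]$ with $d(X_t,Z_t)\gtrsim(1-c)t/2$. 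Your ``fix'' paragraph gestures at this, but never fixes which side carries the thick segment, and your claim that any short curve appearing on $[X_t,Y_t]$ near $X_t$ would have to become short on $\calG$ near $X_t$ is unsupported by anything you cite --- it is essentially the assertion to be proved.

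Second, even with $Z_t$ in hand you never establish that the \emph{whole} segment $[X_t,Z_t]$ is $\epsilon_1$--thick. This is where Theorem \ref{Minsky.fellow.travelers.theorem} is actually needed, not ``held in reserve'': since $d(m,Z_t)\le B$, the concatenation $[m,Z_t]\cup[Z_t,X_t]$ is a genuine $(1,2B)$--quasigeodesic whose endpoints are joined by the $\epsilon$--thick geodesic $[m,X_t]\subset\calG$, so it stays in a nice $D$--neighborhood of $\calG$, and bounded Teichm\"uller distance from the thick part distorts shortest-curve lengths by a bounded factor, giving a nice $\epsilon_1$. Your alternative use of that theorem --- promoting $[X_t,Y_t]\cup[Y_t,X_0]$ to a quasigeodesic --- fails, because the backtracking at $Y_t$ is governed by the Gromov product $\tfrac12\bigl(d(Y_t,X_t)+d(Y_t,X_0)-d(X_0,X_t)\bigr)$, which you can only bound by roughly $ct$; the additive quasigeodesic constant then grows linearly in $t$, so the constant $D$ furnished by Theorem \ref{Minsky.fellow.travelers.theorem} is not nice. (For small $t$ the paper simply takes $Z_t=Y_t$; ``shrinking $\delta_1$'' alone does not address thickness of $[X_t,Y_t]$ there, though it does follow since that segment has nicely bounded length from the $\epsilon$--thick endpoint $X_t$.)
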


\begin{figure}
\input{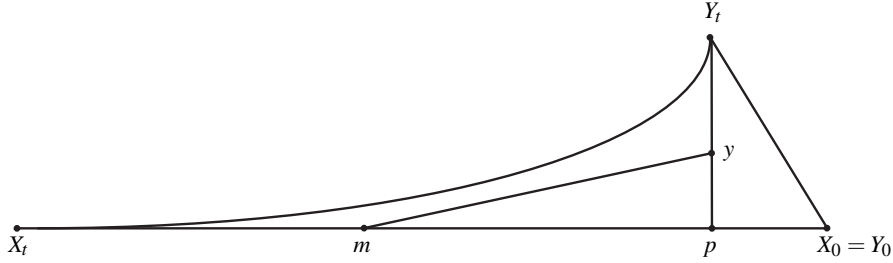}
\caption{Geodesics and points of interest in the proof of Lemma \ref{thick.segment.lemma}.}\label{thick.segment.figure}
\end{figure}
\begin{proof}
See Figure \ref{thick.segment.figure} for geodesics and points of interest throughout the proof.

Let $p$ be a nearest point to $Y_t$ on the geodesic $\calG$, and let $m$ be the midpoint of the geodesic joining $p$ to $X_t$.

Applying Theorem \ref{lipschitz.projection.theorem} to the nearest--points projection $\pi_\calG$, we find that $d(X_0,p) \le d(X_0,Y_t) + b < ct + b$.
Thus $d(p,X_t) > (1-c)t - b$, and so $d(m,p) > ((1-c)t -b)/2$.

Let $y$ in $[p,Y_t]$ minimize distance from $[p,Y_t]$ to $m$ and note that $d(m,y) \ge d(p,y)$. 
The triangle inequality gives us $d(m,p) \le d(m,y) +d(y,p) \le 2d(m,y)$.

Putting these two paragraphs together we obtain $d(m,[p,Y_t]) > ((1-c)t - b)/4$. 
Applying Theorem \ref{Rafi.thick.thin.theorem} to the triangle $\triangle pX_tY_t$ we
can conclude that the point $m$ is a bounded distance $B$ from a point $Z_t$ on the geodesic $[X_t, Y_t]$, where $B$ is a nice constant.

By the triangle equality, this point $Z_t$ is at a distance at least $(1-c)t/2 - B'$ from $X_t$, where $B' = B + b/2$. 
This gives $d(Z_t,X_t) > \delta_1t$ for suitable $\delta_1$ and $t$ larger than a nice constant.
For small $t$ we take $Z_t = Y_t$. 

By Theorem \ref{Minsky.fellow.travelers.theorem}, the geodesic $[Z_t, X_t]$ lies in a $D$--neighborhood of the geodesic $[m, X_t]$ for some nice $D$ (the path $[m, Z_t] \cup [Z_t, X_t]$ is a $(1,2B)$--quasigeodesic).
Since $\calH$ (and hence $[m,X_t]$) is $\epsilon$--thick, there is then a nice $\epsilon_1$  such that $[Z_t, X_t]$ is $\epsilon_1$--thick.
\end{proof}

\subsubsection*{Thick collar}

Let $\core(X_t, Y_t)$ be the convex core of the quasifuchsian manifold $\qf(X_t, Y_t)$ and let $\calX_t$ and $\calY_t$ be the components of $\partial\core(X_t, Y_t)$ facing $X_t$ and $Y_t$, respectively.

We say a subset $\calA$ of a hyperbolic manifold is {\em $\ell$--thick} if it
is contained in the $\ell$-thick part.

\begin{lemma}[Thick collar in $\qf(X_t,Y_t)$]\label{QF.collar.lemma}
There are constants $t_0$, $\epsilon_2$, and $\delta_2$ depending only
on $S$ and $\epsilon$

such that $\core(X_t, Y_t)$ contains an $\epsilon_2$--thick  submanifold $B_t \cong S \times [0,1]$ such that $S \times \{0\} = \calX_t$ and $\dee(\calX_t, S \times \{1\}) \geq \delta_2 t$.
\end{lemma}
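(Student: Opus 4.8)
\noindent\textit{Proof proposal.}
My plan is to turn the thick Teichm\"uller segment $[X_t,Z_t]$ of Lemma \ref{thick.segment.lemma} into a uniformly thick product region inside $\core(X_t,Y_t)$ by pushing it through the bi-Lipschitz model manifold of Brock--Canary--Minsky, and then anchoring that region at $\calX_t$. The first ingredient is the Bilipschitz Model Theorem of \cite{Brock.Canary.Minsky.ELC2}: since $\qf(X_t,Y_t)$ has ending invariants $(X_t,Y_t)$, there is a model manifold $M_{\nu_t}$ and a $K$--bi-Lipschitz homeomorphism $F_t\co M_{\nu_t}\to\qf(X_t,Y_t)$ with $K$ a nice constant (uniformity of $K$ over surface groups on $S$ is part of the a priori bounds proved there). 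The model is assembled from uniformly bounded \emph{thick blocks} glued along Margulis tubes; which tubes occur, and where, is governed by a hierarchy whose large coefficients are the subsurface projection distances $d_W(X_t,Y_t)$, and the slices of the thick part of the model track a sequence of short pants decompositions interpolating between those of $X_t$ and $Y_t$.

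Next I would use the combinatorial picture of Teichm\"uller geodesics from \cite{Rafi.2014}: a proper subsurface $W\subsetneq S$ with $d_W(X_t,Y_t)$ large forces a curve supported in $\overline W$ to become short along $[X_t,Y_t]$ while $W$ is active. Since $[X_t,Z_t]$ is $\ep_1$--thick of length at least $\delta_1 t$, no subsurface is large over this segment, so the portion $P_t$ of the model lying over $[X_t,Z_t]$ contains no Margulis tubes; it is a union of thick blocks, hence thick in the model metric by a nice constant and uniformly bi-Lipschitz to a Riemannian product $(S\times[0,L_t],\,\rho_t+ds^2)$ with $\rho_t$ in the thick part of moduli space. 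The height $L_t$ is comparable, with nice constants, to the number of thick blocks over $[X_t,Z_t]$, which is comparable to the pants-graph distance between the pants decompositions at $X_t$ and at $Z_t$, which --- these being thick points --- is comparable to $\length_{\Teich}([X_t,Z_t])$. Hence $L_t\ge\delta' t$ for a nice $\delta'>0$ once $t$ exceeds a nice constant.

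Finally I would transport through $F_t$. Since $F_t$ is $K$--bi-Lipschitz and lower injectivity-radius bounds transfer under bi-Lipschitz maps up to a multiplicative constant, $F_t(P_t)$ is homeomorphic to $S\times[0,1]$, thick by a nice constant, with its two boundary surfaces at distance at least $K^{-1}L_t$. The surface $\calX_t$, lying within bounded distance of the $X_t$ end of the model and hence in the tube-free part just described, is itself thick, and the region of the model between $\calX_t$ and $P_t$ is a tube-free collar of bounded depth; adjoining its $F_t$--image to $F_t(P_t)$ yields a submanifold $B_t\subset\core(X_t,Y_t)$ homeomorphic to $S\times[0,1]$ with $S\times\{0\}=\calX_t$, still thick for a nice $\epsilon_2$, and with $\dee(\calX_t,S\times\{1\})\ge K^{-1}L_t-(\text{nice constant})\ge\delta_2 t$ for a nice $\delta_2$ and all $t$ larger than a nice $t_0$.

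The step I expect to be the real work is the second paragraph: matching up, \emph{with uniform constants}, the arc-length parametrization of the Teichm\"uller geodesic, the resolution of the associated hierarchy, and the block decomposition of the model --- so that ``$\ep_1$--thick of length $\ge\delta_1 t$'' genuinely yields ``a tube-free product region of model height $\gtrsim t$'' --- together with the companion fact that no Margulis tube of $\qf(X_t,Y_t)$ approaches $\calX_t$. By comparison, the bi-Lipschitz transfer and the anchoring at $\calX_t$ are routine bookkeeping.
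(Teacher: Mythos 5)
Your overall strategy is the paper's: push the thick segment $[X_t,Z_t]$ of Lemma \ref{thick.segment.lemma} through the Brock--Canary--Minsky model of $\qf(X_t,Y_t)$ to get a uniformly thick product region of depth comparable to $t$, anchored at $\calX_t$, and the bi-Lipschitz transfer and anchoring in your last paragraph are indeed routine. But the step you yourself flag as ``the real work'' is a genuine gap, and the way you propose to close it (matching the Teichm\"uller arc-length parametrization against the hierarchy resolution and block decomposition, with uniform constants) is harder than what is actually needed. The difficulty is that the model's blocks and tube coefficients are not indexed by points of the Teichm\"uller geodesic: they are governed by the subsurface projections $d_W(X_t,Y_t)$ of the hierarchy built from the pair $(X_t,Y_t)$. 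Thickness of $[X_t,Z_t]$ only bounds $d_W(X_t,Z_t)$ (Rafi \cite{Rafi.2005}), and by itself says nothing about $d_W(X_t,Y_t)$; your phrase ``the portion $P_t$ of the model lying over $[X_t,Z_t]$'' is not yet defined, because the correspondence between the Teichm\"uller parametrization and the model is exactly what is in question.

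The paper closes this gap with a curve-complex argument rather than a parametrization match. Let $\calH$ be a $\curv(S)$--geodesic from $s(X_t)$ to $s(Y_t)$, where $s$ is the systole map; hyperbolicity of $\curv(S)$ gives an initial segment $\calH_1$ Hausdorff-close to $s([X_t,Z_t])$. For any subsurface $W$ with $\boundary W$ within distance $1$ of a slightly trimmed $\calH_1$, the boundary $\boundary W$ is far in $\curv(S)$ from $s([Z_t,Y_t])$, so the Bounded Geodesic Image Theorem \cite{Masur.Minsky.2000} bounds $d_W(Z_t,Y_t)$; combined with the Rafi bound on $d_W(X_t,Z_t)$ this bounds $d_W(X_t,Y_t)$ for precisely the subsurfaces whose coefficients determine the geometry of the model over the vertices of $\calH_1$. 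The product region $B(\calH_1)$ between the initial cut surface (the one mapping to $\calX_t$) and the final cut surface of $\calH_1$ therefore has bounded geometry --- not literally ``no Margulis tubes,'' as you write, but all tubes of bounded coefficient, hence $\epsilon'$--thick --- and its depth is a uniform multiple of $|\calH_1|$, which is a uniform multiple of $t$ since the systole image of an $\epsilon_1$--thick Teichm\"uller segment is a parametrized quasigeodesic. With this BGIT step supplied, your outline matches the paper's proof.
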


\begin{proof}
The key point is that the thick subsegment $[X_t,Z_t]$ from Lemma \ref{thick.segment.lemma} is reflected in
the structure of the model manifold of 
\cite{Brock.Canary.Minsky.ELC2}. To explain this, let $s:\Teich(S)\to\curv(S)$ be the systole map from Teichm\"uller
space to the complex of curves. Then $s$ takes a Teichm\"uller
geodesic to an unparameterized quasigeodesic, as in
\cite{Masur.Minsky.1999}, with the quality of the quasigeodesic
depending only on $S$. Let $\calH$ be a $\curv(S)$--geodesic connecting
$s(X_t)$ to $s(Y_t)$. By hyperbolicity of $\curv(S)$ there is $K=K_S$ 
so that $\calH$ is at Hausdorff distance at most $K$ from
$s([X_t,Y_t])$, and we can find an initial segment $\calH_1$ 
of $\calH$ which lies Hausdorff distance at most $K$ from $s([X_t,Z_t])$.

Because $[X_t,Z_t]$ is $\epsilon_1$--thick, by \cite{Rafi.2005} there is a bound
$d_W(X_t,Z_t) < B$ for $B=B(S,\epsilon_1)$ and any subsurface $W$ of
$S$. (Here, $d_W(X,Y)$ is the ``subsurface projection distance'' discussed
in \cite{Masur.Minsky.2000,Minsky.2010} and
\cite{Brock.Canary.Minsky.ELC2}. Namely it is the distance in the
arc/curve complex of $W$ between the intersections with $W$ of the
shortest filling curve systems in $X$ and $Y$ respectively.)
The Bounded Geodesic Image Theorem \cite{Masur.Minsky.2000}
provides constants $b,k$ such that if the $\curv(S)$--distance between
$[\boundary W]$ and $s([Z_t,Y_t])$ is at least $k$
then $d_W(Z_t,Y_t) \le b$. It follows, after
trimming the end of $\calH_1$ by a bounded amount, that for any $W$ with
$d(\boundary W,\calH_1) \le 1$ we have $d_W(X_t,Y_t) \le c'$.

The Bilipschitz Model Theorem of
\cite{Brock.Canary.Minsky.ELC2,Minsky.2010} provides a manifold $\MM$
depending on $(X_t,Y_t)$ and a bilipschitz homeomorphism $f:\MM\to \qf(X_t,Y_t)$.
The structure of $\MM$ is determined by $\calH$, and in particular any 
vertex $v$ of $\calH$ can be associated to a ``cut surface'' $\tau_v$ in $\MM$, 
whose inclusion is a homotopy equivalence and whose geometry is
determined by $v$ and the projections
$d_W(X_t,Y_t)$ for subsurfaces $W$ with $d(\boundary W,v)\le 1$.
When two vertices are sufficiently far apart their cut surfaces cobound a
product region. The bounds on $d_W(X_t,Y_t) $ from the previous
paragraph imply that product regions determined by cut surfaces based
on vertices of $\calH_1$ have bounded geometry, and in particular are
$\epsilon'$--thick for some nice $\epsilon'$. See Sections 4 and 5, and
particularly Lemma 5.7, of 
\cite{Brock.Canary.Minsky.ELC2} for the construction of these regions
in the general setting. Theorem 7.1 of 
\cite{Brock.Canary.Minsky.ELC2} indicates how bounds on $d_W$ give
rise to bounded--geometry regions.

The initial cut surface of $\calH_1$ is the surface in the model that maps
to the convex hull boundary $\calX_t$. If we build a product
region $B(\calH_1)$ bounded by the initial and final vertices of $\calH_1$,
the distance between its two boundary components is at least  a
uniform multiple of the length $|\calH_1|$ of $\calH_1$.
One can see this by dividing it up using cut surfaces for equally--spaced vertices of $\calH_1$.
Since the length $|\calH_1|$ is a uniform multiple of $t$, the image of $B(\calH_1)$ in $\qf(X_t,Y_t)$ under the bilipschitz model map $f$ is the desired product region.
\end{proof}

It will be convenient to talk about product subregions of $B_t$. From
the fact that $B_t$ has bounded geometry (or the construction itself)
one has for each $x$ in $B_t$ a surface $F_x$ isotopic to $\calX$  which
contains $x$ and has diameter bounded by $D$ depending only on $\ep$.
It follows that for each $s$ in $[0,\delta_2 t]$ there exists a region
$B_t[s]\subset B_t$ such that
\begin{enumerate}
\item $\calN_s(\calX) \subset B_t[s] \subset \calN_{s+2D}(\calX)$,
\item $B_t[s]$ is homeomorphic to $S\times [0,1]$
\end{enumerate}
(here $\calN_s$ denotes an $s$--neighborhood within
$\core(X_t,Y_t)$). Simply pick the region between $F_x$ and $\calX$,
where $\mathrm{dist}(x,\calX) = s+D$.  

Let $\pi:\qf(X_t,Y_t) \to M_t$ be the covering map and 
let $\calX_t' = \pi(\calX_t$).

\begin{lemma}[Embedding of collar]\label{thick.collar.lemma}
There exists $t_1>t_0$ depending on $S,\ep$, such that for $t>t_1$ the
covering map $\pi$ embeds $B_t[\delta_2t/3]$ in $\core(M_t)$, and the
image is in the $\ep_2$--thick part of $M_t$.
\end{lemma}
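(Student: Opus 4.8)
The plan is to transfer the product collar $B_t$ of Lemma~\ref{QF.collar.lemma} from the quasifuchsian cover down to $M_t$ through $\pi$. Two things must be shown: that $\pi$ is injective on $B_t[\delta_2 t/3]$, and that the image lies in the $\ep_2$--thick part of $M_t$. The second follows easily from the first: a geodesic loop at $q=\pi(\tilde x)$, $\tilde x\in B_t[\delta_2 t/3]$, of length $<2\ep_2$ lifts, starting at $\tilde x$, to a geodesic segment of the same length, and for $t$ larger than a nice constant this segment stays inside $B_t$ (which has depth $\ge\delta_2 t$); injectivity of $\pi$ on a slight enlargement of $B_t[\delta_2 t/3]$ then forces the segment to close up, contradicting $\ep_2$--thickness of $B_t$. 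So everything rests on injectivity, which I would reduce to a length estimate for arcs.

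It is classical, because $M$ is acylindrical with incompressible boundary, that $\pi$ restricts to a homeomorphism $\calX_t\to\calX_t'=\partial\core(M_t)$, and that the other components of $\pi^{-1}(\partial\core(M_t))$ are disjoint from $\calX_t$ and, by malnormality of $\pi_1 S$ in $\pi_1 M$ (acylindricity, $M$ having no torus boundary), simply connected; each such $\calZ$ is a plane covering $\partial\core(M_t)$ infinitely. Now $\pi$ restricted to the component of $\pi^{-1}(\calN_{\delta_2 t/3+2D}(\partial\core(M_t)))$ containing $\calX_t$ is a covering onto a neighborhood of $\partial\core(M_t)$ that $\pi_*$ maps isomorphically onto $\pi_1(\partial\core(M_t))$, hence a homeomorphism, and this component contains $B_t[\delta_2 t/3]$ provided no plane $\calZ$ comes within $2\delta_2 t/3+4D$ of $\calX_t$ in $\qf(X_t,Y_t)$. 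Equivalently: it suffices to show that every arc $\alpha$ in $\core(M_t)$ with endpoints on $\partial\core(M_t)$ that is \emph{not} homotopic rel $\partial\core(M_t)$ into $\partial\core(M_t)$ has length exceeding $2\delta_2 t/3+4D$.

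To prove this, take $\alpha$ shortest and lift it, starting on $\calX_t$, to $\tilde\alpha$ in $\qf(X_t,Y_t)$; its other endpoint lands on one of the planes $\calZ$ precisely because $\alpha$ is essential. Suppose $\ell(\alpha)\le 2\delta_2 t/3+4D<\delta_2 t$. Since $\alpha$ is a shortest essential arc it meets $\partial\core(M_t)$ only at its endpoints and stays in $\core(M_t)$, so $\tilde\alpha$ enters the core side of $\calX_t$, never crosses $\calX_t$ again, and cannot reach $\calY_t$ (at distance $\ge\delta_2 t$); hence $\tilde\alpha$ lies in $\core(\qf(X_t,Y_t))$, and being within $\delta_2 t$ of $\calX_t$ it lies in the collar $B_t\cong S\times[0,1]$ with $\calX_t=S\times\{0\}$. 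Thus $\tilde\alpha$ runs inside $B_t$ from $S\times\{0\}$ to a point of $\calZ\cap B_t$. Here I would use that $\calZ$ is simply connected while the fibers of $B_t$ are incompressible: an essential curve of $\calZ\cap B_t$ would be essential in a fiber, hence in $\qf(X_t,Y_t)$, contradicting that it bounds a disk in $\calZ$; so $\calZ$ meets $B_t$ only in inessential pieces, and---invoking the uniform thickness of $B_t$ and the bound $\diam(F_x)\le D$ on its slices to rule out thin fingers reaching inward---these pieces stay within $O(D)$ of the far boundary $S\times\{1\}$. Then $\tilde\alpha$ runs from $S\times\{0\}$ to within $O(D)$ of $S\times\{1\}$, so $\ell(\alpha)=\ell(\tilde\alpha)\ge\delta_2 t-O(D)>2\delta_2 t/3+4D$ once $t$ exceeds a nice constant---contradiction. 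I expect this last step, promoting ``$\calZ\cap B_t$ inessential'' to ``$\calZ\cap B_t$ confined near $S\times\{1\}$,'' to be the main obstacle.

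With injectivity on $B_t[\delta_2 t/3]$ established, the thickness of the image follows as in the first paragraph, proving the lemma; and since $\pi$ is a $1$--Lipschitz local isometry that carries $\calX_t$ isometrically onto $\partial\core(M_t)$ and is injective on $B_t[\delta_2 t/3]$, a shortest path in $\core(M_t)$ from $\partial\core(M_t)$ of length $<\delta_2 t/3$ lifts to a path of the same length staying in $B_t[\delta_2 t/3]$; hence $\pi(B_t[\delta_2 t/3])$ contains the $(\delta_2 t/3)$--neighborhood of $\partial\core(M_t)$ in $\core(M_t)$. Taking $t_1$ larger than $t_0$ and all the nice thresholds above, and $\delta=\delta_2/3$, this yields Theorem~\ref{thick.collar.theorem}.
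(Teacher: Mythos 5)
Your overall architecture (injectivity of $\pi$ on a slight enlargement of $B_t[\delta_2t/3]$, then thickness of the image via lifting short loops, then the depth statement) matches the paper, and the thickness and depth paragraphs are fine. But the core of the lemma --- showing that no other component $\calZ$ of $\pi^{-1}(\pi(\calX_t))$ comes within roughly $\delta_2 t$ of $\calX_t$ --- is exactly where your argument does not close, and you say so yourself: you allow $\calZ$ to intersect the collar $B_t$, try to show the intersection is ``inessential'' using simple-connectedness of $\calZ$ and incompressibility of the fibers, and then need to promote this to ``$\calZ\cap B_t$ is confined near $S\times\{1\}$.'' That promotion is not a technicality: topological information alone (a simply connected $\calZ$ meeting $B_t$ only in disks) does not prevent a disk piece of $\calZ$ from dipping deep into the collar, so the length estimate for your essential arc $\alpha$ does not follow, and the proof has a genuine gap at its decisive step.

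The idea you are missing is a convexity containment that makes the whole intersection question moot. Since $\core(X_t,Y_t)\subset\pi^{-1}(\core(M_t))$ and $\pi(\calX_t)\subset\partial\core(M_t)$, every component of $\pi^{-1}(\pi(\calX_t))$ lies in $\pi^{-1}(\partial\core(M_t))=\partial\,\pi^{-1}(\core(M_t))$ and therefore cannot meet the interior of $\core(X_t,Y_t)$ at all; in particular a component $\calZ\ne\calX_t$ is disjoint from $B_t$, and the product region $B_t$, of depth $\ge\delta_2 t$, separates $\calZ$ from $\calX_t$, giving $d(\calZ,\calX_t)\ge\delta_2 t$ outright. (Your planes $\calZ$ are pieces of the boundary of a convex set containing $B_t$, which is precisely why they cannot penetrate the collar; you never invoke this, and it is the fact your ``confinement'' step is silently trying to reprove.) With that distance bound, $\pi$ is injective on the $s$--neighborhood of $\calX_t$ for $s<\delta_2t/2$, which contains $B_t[\delta_2t/3]$ for large $t$, and your lifting argument for thickness then goes through as in the paper. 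Note also that simple-connectedness of the $\calZ$ (via acylindricity) is not needed for this lemma --- the paper only uses it later, in Lemma \ref{first.time.lemma}.
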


\begin{proof}
Note first that $\core(X_t,Y_t)$ is contained in the
pullback $\pi^{-1}(\core(M_t))$, and that $\calX_t$ is a boundary
component of both $\core(X_t,Y_t)$ and $\pi^{-1}(\core(M_t))$.
Therefore any component 
$\calZ$ of $\pi^{-1}(\pi(\calX_t))$ cannot meet $int(\core(X_t,Y_t))$, and
if $\calZ \ne \calX_t$ then $\calZ$ is disjoint from $B_t$, which then
separates it from $\calX_t$. It follows 
that the distance from $\calZ$ to $\calX_t$ is at least $\delta_2t$.

Thus for $s<\delta_2t/2$, the $s$--neighborhood $C_s$ of $\calX_t$ in 
$\core(X_t,Y_t)$ is disjoint from the $s$--neighborhoods of the
other components of $\pi^{-1}(\pi(\calX_t))$.
We conclude that $\pi|_{C_s}$ is an embedding into $M_t$. For
suitable $t_1$ we have that the product region $B_t[\delta_2t/3]$ is
in such a neighborhood, so $\pi$ embeds it.

Now as soon as $\delta_2t/6 > \ep_2+2D$ we find that any loop of length $\ep_2$
based at a point in $\pi(B_t[\delta_2t/3])$ lifts to a loop in $B_t[\delta_2t/2]$, so
since $B_t$ is in the $\ep_2$--thick part of $\qf(X_t,Y_t)$, we
conclude that $\pi(B_t[\delta_2t/3])$  is in the $\ep_2$--thick part of
$M_t$. 
\end{proof}

Theorem \ref{thick.collar.theorem} is now just a rewording of Lemma
\ref{thick.collar.lemma}.

\section{Geometric inflexibility}\label{Geometric.inflexibility.section}
The goals of this section are Theorem \ref{eta.C1.control}, which
uses Geometric Inflexibility to give exponentially shrinking bounds on
the time and space derivatives of our family of metrics; and
Proposition \ref{control.skinning}, which uses these bounds and the
proxy surfaces of Lemma \ref{nearby.proxy.lemma} to control the speed
of the skinning image.  

It is a classical fact that two hyperbolic $3$--manifolds are
$K$--quasiconformally conjugate if and only if there is an
$L$--bi-Lipschitz map between them and each of the constants $K$ and
$L$ can be effectively controlled in terms of the other, see, for example, Theorem 2.5 and Corollary B.23 of \cite{McMullen.1996}. 
McMullen \cite{McMullen.1996} showed that if the injectivity radius is bounded away from zero, the bi-Lipschitz map may be chosen so that the pointwise bi-Lipschitz constant decays exponentially to 1 as the point moves deeper into the convex core. 
McMullen called this {\em geometric inflexibility}. 
In Brock--Bromberg \cite{Brock.Bromberg.2011}, an alternative approach to geometric inflexibility removes the global restriction on the injectivity radius and shows that the bi-Lipschitz constant decays exponentially away from the thin part.
 We use this version here.

\begin{theorem}\label{eta.C1.control}
Let $M$ be a compact, smooth hyperbolizable $3$--manifold  with
boundary and $X_t$ a smooth $1$--parameter family of conformal
structures on $\partial M$ such that
$\|\bdot X_t\|_\Teich\le 1$. Then there exists a smooth
family of complete hyperbolic metrics $g_t$ on the interior of $M$
that extend continuously to the conformal structures $X_t$ on
$\partial M$ and such that for $x$ in the $\epsilon$-thick part of $M_t$,
    \[
     \|\eta_t(x)\| \le Ae^{-Bd(x, M_t-\core(M_t))}
     \]
    and
    \[
     \|\nabla^t\eta_t(x)\|\le Ae^{-Bd(x, M_t-\core(M_t))}.
     \]
The constants $A$ and $B$ depend only on the topological type of
$\partial M$ and on $\epsilon$.
\end{theorem}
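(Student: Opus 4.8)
The plan is to construct the family $g_t$ from the Ahlfors--Bers parametrization and then to invoke the geometric inflexibility estimates of \cite{Brock.Bromberg.2011} after normalizing the time parameter so that the strain field of the family is the canonical (divergence--free) one. First I would produce the metrics: since $M$ is compact and hyperbolizable with boundary, the convex cocompact hyperbolic structures on its interior with conformal boundary $X_t$ are cut out by the Ahlfors--Bers map, which depends real--analytically on $X_t$; precomposing with the smooth path $t\mapsto X_t$ gives a smooth family of complete hyperbolic metrics on the interior that extends continuously to $X_t$ on $\partial M$. Such a family is only canonical up to a time--dependent isotopy, so I would fix the isotopy so that at each time the strain field $\eta_t$ is divergence--free. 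In this gauge $\eta_t$ is the harmonic strain field on $M_t$ determined by the infinitesimal deformation of the conformal boundary, which is exactly the object for which the inflexibility estimates of \cite{Brock.Bromberg.2011} are stated.

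Second I would record the a priori control on the size of the deformation. The infinitesimal deformation of the conformal structure on $\partial M$ has a Beltrami representative of $L^\infty$ norm at most $\|\bdot X_t\|_\Teich\le 1$ (Teichm\"uller's theorem; compare Lemma \ref{bound.Teich.norm}), while the area of the convex hull boundary $\boundary\core(M_t)$ is bounded by $2\pi|\chi(\partial M)|$ by Gauss--Bonnet, since that boundary is a convex (hence nonpositively curved) surface of the topological type of $\partial M$. Feeding these two facts into the $L^2$ representation of the harmonic strain field from \cite{Brock.Bromberg.2011} yields a bound $\int_{\core(M_t)}\|\eta_t\|^2 \le E$ with $E$ depending only on the topological type of $\partial M$ --- in particular independent of $M$ and of $t$. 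This is the ``seed'' of the inflexibility iteration.

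Third comes the heart of the matter, the exponential decay, which is precisely the content of \cite{Brock.Bromberg.2011} once the setup above is in place. The divergence--free harmonic strain field satisfies the Bochner--type differential inequality of that paper, which produces a mean--value inequality on embedded metric balls; iterating this inequality along a chain of balls of radius $\sim\epsilon$ marching from $x$ out to the complement of $\core(M_t)$ contracts the seed $L^2$ bound by a definite factor at each step, giving $\|\eta_t(x)\|\le Ae^{-Bd(x,\,M_t-\core(M_t))}$ for $x$ in the $\epsilon$--thick part. The $\epsilon$--thick hypothesis guarantees that balls of a fixed radius depending on $\epsilon$ embed, with uniformly controlled elliptic constants, and the boundary of the convex core is where the harmonic field ``sees'' its boundary data, which is why the decay is measured against $d(x, M_t-\core(M_t))$. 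The $C^1$ estimate on $\nabla^t\eta_t$ follows by the same mechanism: interior elliptic regularity (again a consequence of the Bochner formula) bounds $\|\nabla^t\eta_t\|$ on a ball by the $L^2$ norm of $\eta_t$ on a slightly larger concentric ball, so $\nabla^t\eta_t$ inherits the exponential decay after shrinking radii by a fixed amount that is absorbed into $A$. All constants are then seen to depend only on $\epsilon$ and on $\chi(\partial M)$.

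The main obstacle I anticipate is not a new idea but the bookkeeping of uniformity: one must verify that every constant entering the inflexibility estimates --- the embedding radius of the balls, the elliptic constant in the mean--value inequality, the per--ball contraction factor, and the initial $L^2$ bound $E$ --- can be taken to depend only on $\epsilon$ and the topology of $\partial M$, and not on $M$ itself nor on the parameter $t$ (for which one uses the normalization $\|\bdot X_t\|_\Teich\le 1$ and the bounded area of $\boundary\core(M_t)$). A secondary point requiring care is the gauge normalization in the first step: one must check that, after the chosen isotopy, the strain field of the family really is the harmonic strain field up to a term that is a symmetrized covariant derivative of a vector field and so does not affect its trace--free operator norm, so that the estimates of \cite{Brock.Bromberg.2011} apply to $\eta_t$ verbatim.
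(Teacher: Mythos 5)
Your proposal is correct and follows essentially the same route as the paper: a family whose strain fields are harmonic, an initial $L^2$ bound on $\core(M_t)$ depending only on the topology of $\partial M$, exponential $L^2$ decay away from $\partial\core(M_t)$ via the Brock--Bromberg inflexibility estimates, and elliptic estimates on embedded $\epsilon$--balls to upgrade to pointwise bounds on $\eta_t$ and $\nabla^t\eta_t$. The only cosmetic difference is that the paper obtains the family with harmonic strain fields directly from Reimann's work, rather than from the Ahlfors--Bers parametrization followed by your divergence--free gauge normalization (which is exactly what Reimann's construction supplies).
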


\begin{proof} The proof is a straightforward combination of several
  results.  Work of Reimann \cite{Reimann.1985} supplies a family of
  hyperbolic metrics $g_t$ that extend continuously to $X_t$ and such
  that the associated strain fields $\eta_t$ are {\em harmonic}. One
  obtains bounds on the $L^2$--norm of $\eta_t$ in $\core(M_t)$ that
  are linear functions of the genus of $\partial M$ (Lemma 5.2 in
  \cite{Brock.Bromberg.2011}). By Theorem 3.6 of
  \cite{Brock.Bromberg.2011}, the $L^2$--norm of $\eta_t$ on the
  submanifold of points in $\core(M_t)$ a distance $>r$ from $\partial
  \core(M_t)$ decays exponentially in $r$. From this one obtains bounds on the
  $L^2$--norm of $\eta_t$ in an $\epsilon$--ball centered at $x$. The
  pointwise norm bounds on $\eta$ and $\nabla\eta$ then follow from standard estimates in partial
  differential equations, see \cite{Brock.Bromberg.2004.erratum}. 
\end{proof}

\subsubsection*{Peripheral curves do not get short}
By assumption the length of any closed curve on $X_t$ is at least $\epsilon$. However, a lower bound on the length of curve on the conformal boundary does not, in general, imply lower bounds on length in the hyperbolic $3$--manifold. We now combine Theorem \ref{thick.collar.theorem} and geometric inflexiblity to show that such a bound does hold for the manifolds in our family.
\begin{theorem}\label{only.nonperipheral.thin}
There exists $\epsilon'>0$ depending on $S$ and $\epsilon$ such that for
all $t>0$ every curve $\gamma$ in $S$ has $\ell_{M_t}(\gamma) > \epsilon'$.
\end{theorem}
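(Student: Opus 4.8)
The plan is to control $\bigl|\tfrac{d}{dt}\log\ell_{M_t}(\gamma)\bigr|$ by geometric inflexibility and integrate, using the deep thick collar of Theorem~\ref{thick.collar.theorem} to force the $M_t$--geodesic of $\gamma$ to be either long already or else so far inside $\core(M_t)$ that its length is essentially frozen. The base case is $t=0$: the convex core of $M_0=M^{X_0}$ is isometric to $M$ with its totally geodesic boundary $X_0=\calG(0)$, which is $\epsilon$--thick, and the holonomy of $\gamma$ preserves the totally geodesic plane in $\HH^3$ covering $\partial\core(M_0)$, so its axis lies on that plane and the closed geodesic of $M_0$ in the class of $\gamma$ is the $X_0$--geodesic of $\gamma$. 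Hence $\ell_{M_0}(\gamma)=\ell_{X_0}(\gamma)\ge\epsilon$ for every essential $\gamma\subset S$.

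Let $\gamma_t^*$ be the closed geodesic of $M_t$ homotopic to $\gamma$; here the family $g_t$ of Theorem~\ref{eta.C1.control}, applied with $\|\bdot X_t\|_\Teich=1$, is the family $M_t=M^{X_t}$ by uniqueness of the convex cocompact structure with given conformal boundary. The first--variation formula for geodesic length gives $\bigl|\tfrac{d}{dt}\log\ell_{M_t}(\gamma)\bigr|\le\sup_{\gamma_t^*}\|\eta_t\|$. If $\gamma_t^*$ were in a thick part this would be bounded by Theorem~\ref{eta.C1.control}, but ruling out a short $\gamma_t^*$ is precisely the point, so I would instead use the sharper length estimate from \cite{Brock.Bromberg.2011} --- in which the $L^2$ decay of the harmonic strain field controls $\int_{\gamma_t^*}\|\eta_t\|$ for \emph{any} closed geodesic, with no dependence on how thin a Margulis tube $\gamma_t^*$ lies in --- to obtain nice constants $A,B$ with
\[
\Bigl|\tfrac{d}{dt}\log\ell_{M_t}(\gamma)\Bigr|\;\le\;A\,e^{-B\,d(\gamma_t^*,\,M_t\smallsetminus\core(M_t))}\;\le\;A
\]
for all $t\ge0$ and all essential $\gamma$.

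Now let $\epsilon_2,\delta,t_1$ be the nice constants of Theorem~\ref{thick.collar.theorem}. Since the $\epsilon_2$--thick collar contains the $\delta t$--neighbourhood of $\partial\core(M_t)$ in $\core(M_t)$, every point of $\core(M_t)$ either lies in that collar or is at distance at least $\delta t$ from $\partial\core(M_t)$; applying this to the connected set $\gamma_t^*\subset\core(M_t)$ with $t>t_1$, either $\gamma_t^*$ meets the collar --- so it runs through an $\epsilon_2$--thick point and $\ell_{M_t}(\gamma)\ge2\epsilon_2$ --- or $\gamma_t^*$ avoids it and $d(\gamma_t^*,M_t\smallsetminus\core(M_t))\ge\delta t$, so that $\bigl|\tfrac{d}{dt}\log\ell_{M_t}(\gamma)\bigr|\le Ae^{-B\delta t}$. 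Fix $t>0$. If $t\le t_1$, the base case and $\bigl|\tfrac{d}{ds}\log\ell\bigr|\le A$ give $\ell_{M_t}(\gamma)\ge\epsilon\,e^{-At_1}$. If $t>t_1$ and $\ell_{M_t}(\gamma)\ge2\epsilon_2$ there is nothing to prove. Otherwise $t>t_1$ and $\ell_{M_t}(\gamma)<2\epsilon_2$; let $(t_2,t]$ be the component of $\{s\in(t_1,t]:\ell_{M_s}(\gamma)<2\epsilon_2\}$ containing $t$, so $\gamma_s^*$ is deep for every $s\in(t_2,t]$ and hence
\[
\bigl|\log\ell_{M_t}(\gamma)-\log\ell_{M_{t_2}}(\gamma)\bigr|\;\le\;\int_{t_1}^{\infty}A\,e^{-B\delta s}\,ds\;=\;\frac{A}{B\delta}\,e^{-B\delta t_1}.
\]
If $t_2>t_1$ then $\ell_{M_{t_2}}(\gamma)=2\epsilon_2$ by continuity; if $t_2=t_1$ then $\ell_{M_{t_1}}(\gamma)\ge\epsilon\,e^{-At_1}$ by the case $t\le t_1$. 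In every case $\ell_{M_t}(\gamma)$ exceeds a fixed nice positive constant, and choosing $\epsilon'$ below all these bounds proves the theorem.

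The delicate point is the length estimate of the second paragraph. Theorem~\ref{eta.C1.control} controls $\|\eta_t\|$ only on the thick part, and feeding in the thickness of a short $\gamma_t^*$ would make $A,B$ --- hence the final $\epsilon'$ --- depend on the very lower bound we are trying to establish, a genuinely circular dependence. Making the argument work requires the estimate from \cite{Brock.Bromberg.2011} bounding a harmonic strain field (and thus the length derivative) along a geodesic deep in the convex core independently of the thinness of the tube containing it; this is where essentially all of the content lies.
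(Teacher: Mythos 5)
This is essentially the paper's own argument: the paper likewise combines the thick--collar theorem (which forces the $\epsilon_4$--Margulis tube of any short curve to lie at depth at least $\delta_2 t/3$ in $\core(M_t)$) with Brock--Bromberg's Theorem 5.8 --- the inflexibility-based drift bound $\bigl|\log(\ell_{M_{t+s}}(\gamma)/\ell_{M_t}(\gamma))\bigr| \le C_1 e^{-C_2 d(\bT_t(\gamma),\,M_t\ssm\core(M_t))}$ for $|s|\le 1$, obtained by applying inflexibility to the tube boundary --- and then traces back to the last time the length equalled a fixed $\epsilon_4$ and sums the resulting geometric series, which is exactly your integrate-the-exponential-tail step. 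The only calibration differences are that the cited estimate holds for curves shorter than $\epsilon_4=\min(\epsilon_2,\epsilon_3,\mu_3)$ and is phrased via the depth of the tube $\bT_t(\gamma)$ rather than of $\gamma_t^*$ (so your threshold $2\epsilon_2$ should be cut down by the Margulis constant), and on $[0,t_1]$ the paper simply asserts a bound depending on $\epsilon,t_1$ instead of your uniform derivative bound for \emph{all} (possibly long) curves, which as stated over-reaches what \cite{Brock.Bromberg.2011} gives but is repaired by invoking the drift bound only while the curve is Margulis-short, exactly as you already do for $t>t_1$.
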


\begin{proof} Let $t_1$ be the constant from Lemma \ref{thick.collar.lemma}. Then for all $t< t_1$ there is an $\epsilon_3$, depending only on $\epsilon$ and $t_1$, such that $\ell_{M_t}(\gamma)\ge \epsilon_3$.

Let $\epsilon_2$ also be the constant from Lemma \ref{thick.collar.lemma} and choose $\epsilon_4$ to be the minimum of $\epsilon_2$, $\epsilon_3$ and the $3$--dimensional Margulis constant. If $\ell_{M_t}(\gamma) < \epsilon_4$, let $\bT_t(\gamma)$ be the $\epsilon_4$--Margulis tube for $\gamma$ in $M_t$. Then by Lemma \ref{thick.collar.lemma}, $d(\bT_t(\gamma), M_t - \core(M_t)) \ge \delta_2t/3 = \delta_3 t$ where, again, $\delta_2$ is from Lemma \ref{thick.collar.lemma}.

By Theorem 5.8 in Brock--Bromberg \cite{Brock.Bromberg.2011}, there
exist constants $C_1$ and $C_2$, depending only on $\boundary M$,
such that if $\ell_{M_t} < \epsilon_4$ then
\begin{equation}\label{length.bound}
\left| \log\frac{\ell_{M_{t+s}}(\gamma)}{\ell_{M_t}(\gamma)}\right| \le C_1e^{-C_2d(\bT_t(\gamma), M_t-\core(M_t))}
\end{equation}
for $|s| \le 1$. 
(This is a consequence of their geometric inflexibility theorem,
applied to the boundary of $\bT_t(\gamma)$.)
Choose $\epsilon'<\epsilon_4$ such that

\[
-\log \frac{\epsilon'}{\epsilon_4} = \frac{C_1e^{-C_2\delta_3t_1}}{1-e^{-C_2\delta_3}}.
\]
We will show that $\ell_{M_t}(\gamma) \ge \epsilon'$.

If $\ell_{M_t} (\gamma)\ge\epsilon_4$ we are done.
So assume to the contrary that $\ell_{M_t} (\gamma) < \epsilon_4$.
Choose $t_\gamma< t$ such that $\ell_{M_{t_\gamma}}(\gamma) =
\epsilon_4$ and $\ell_{M_s}(\gamma) \le \epsilon_4$ for all $s$ in $[t_\gamma,t]$. 
Since $\ell_{M_s}(\gamma) \ge \epsilon_4$ when $s\le t_1$ and $\ell_{M_s}(\gamma)$ is continuous in $s$, such a $t_\gamma$ exists and is bigger than $t_1$ . 

Using the fact that for $s$ in $[t_\gamma,t]$ we have $d(\bT_s(\gamma), M_s-\core(M_s)) \ge \delta_3s$, we can repeatedly apply \eqref{length.bound} to see that
\[
\left| \log \frac{\ell_{M_t}(\gamma)}{\ell_{M_{t_\gamma}}(\gamma)}\right| \le \sum_{k=0}^n C_1e^{-C_2 \delta_3(t_\gamma+k)}<\sum_{k=0}^\infty C_1e^{-C_2\delta_3 (t_\gamma+k)}
\]
where $n$ is the least integer greater than $t-t_\gamma$. Summing this geometric series gives the desired bound.
\end{proof}

\section{Proxy surfaces}\label{proxy.section}

We now introduce the smooth locally convex surfaces in $M_t$ whose
geometry will give us good control of the conformal geometry of the
skinning surface $Y_t$.
In Lemma \ref{first.time.lemma}, we locate these surfaces with respect to
our thick collars. 

\begin{lemma}\label{nearby.proxy.lemma} There is a smooth surface $\calE_t$ in the $Y_t$--end of $\qf(X_t, Y_t)$ in the $3$--neighbor-hood of $\calY_t$ whose principal curvatures are within $\frac{1}{4}$ of $1$.
\end{lemma}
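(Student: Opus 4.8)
The plan is to obtain $\calE_t$ by flowing outward from the convex hull boundary $\calY_t$ into the $Y_t$--end and smoothing. Recall that $\calY_t$ is the component of $\partial \core(X_t,Y_t)$ facing $Y_t$; it is a locally convex (bent) surface, a pleated surface whose bending locus is a geodesic lamination, and the $Y_t$--side of it is foliated by the equidistant surfaces $\calY_t^{(r)} = \{x : d(x,\calY_t) = r\}$ for $r > 0$. Each $\calY_t^{(r)}$ is a $C^{1,1}$ convex surface, and at every point of smoothness its principal curvatures lie in the interval $(\tanh r, \coth r)$ — indeed, the normal flow from a convex (possibly bent) surface has the effect, on the geometry of the level sets, of replacing any ``principal curvature'' $\kappa \in [1,\infty]$ (with $\infty$ along the bending locus) by $\coth(r + \operatorname{arccoth}\kappa)$, which as $r$ grows converges uniformly to $1$ from above, while the flat directions give curvature $\tanh r \to 1$ from below. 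So for $r$ large the equidistant surface already has principal curvatures within $1/4$ of $1$, but it is only $C^{1,1}$, not smooth, and — more seriously — for it to lie in the $3$--neighborhood of $\calY_t$ we cannot take $r$ as large as we please. The fix is to take $r$ a definite small constant (say $r = 1$) so that $\calY_t^{(1)}$ lies well inside the $3$--neighborhood and has principal curvatures in $(\tanh 1, \coth 1) \approx (0.76, 1.31)$, which is \emph{not} quite inside $(\tfrac34, \tfrac54)$ at the upper end, so one must either push $r$ up to where $\coth r < 5/4$ (that is $r > \operatorname{arccoth}(5/4) \approx 0.80$, still comfortably giving a surface in the $3$--neighborhood) and accept the lower bound $\tanh r > \tanh(4/5) > 2/3$, or, more robustly, first convexify.

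Here is the cleaner route I would actually write. First replace $\calY_t$ by a \emph{smooth strictly convex} surface $\calY_t'$ in the $1$--neighborhood of $\calY_t$: this is a standard mollification of the boundary of a small neighborhood of $\core(X_t,Y_t)$ on the $Y_t$ side (for instance, the boundary of the $\rho$--neighborhood of the convex core for tiny $\rho$, smoothed by averaging the support function over a small ball in a way that preserves convexity), whose principal curvatures are bounded and bounded below by a positive constant, with all bounds depending only on nothing (the construction is scale--local and the convex core is genuinely convex). Then flow $\calY_t'$ outward by the normal flow for time $r$. Under the outward normal flow in $\HH^3$, if $\calY_t'$ has principal curvature $\kappa > 0$ at a point, the image surface at distance $r$ has principal curvature $\coth(r + \operatorname{arccoth}\kappa)$ along the corresponding direction; since $\kappa > 0$ this is finite and $\to 1$ as $r \to \infty$, uniformly in $\kappa \in (0,\infty)$ — the map $\kappa \mapsto \operatorname{arccoth}\kappa$ is a homeomorphism $(0,\infty] \to (0,\infty)$ wait, onto $(0,\infty)$ for $\kappa>1$ and we need $\kappa$ possibly $<1$, in which case $\operatorname{arccoth}\kappa$ is complex; the correct statement is that a convex surface has $\kappa > 0$ and the geodesic normal flow sends it to surfaces whose principal curvatures increase toward $1$ if $\kappa<1$ and decrease toward $1$ if $\kappa>1$, staying in $(\min(\kappa,\tanh r),\ \max(\kappa,\coth r))$. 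In all cases, choosing $r = r_0$ a universal constant large enough that both $\tanh r_0 > 3/4$ and the flowed-up curvatures from the bounded data of $\calY_t'$ are within $1/4$ of $1$, we get a smooth surface with the right curvature bound; and since $r_0$ is universal and $\calY_t'$ is within $1$ of $\calY_t$, the flowed surface is within $1 + r_0$ of $\calY_t$ — so one rescales the constants or, more honestly, one just checks that the construction can be arranged within distance $3$. (Concretely: convexify within distance $\tfrac12$, flow for time $r_0 \le \tfrac52$; since the universal bound only requires $r_0$ bounded below, not large, e.g.\ $r_0 = 2$ suffices for $\tanh 2 > 3/4$ and handles the convexified curvature data.)

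The surface $\calE_t$ is then the image of this flow, sitting in the $Y_t$--end (it stays on the $Y_t$ side of $\calY_t$ because the normal flow from a convex surface never re-enters the convex hull), in the $3$--neighborhood of $\calY_t$, smooth because the normal flow of a smooth surface with curvatures bounded away from the focal value $1/\kappa$... rather, bounded so that no focal points occur, remains smooth, and with principal curvatures within $\tfrac14$ of $1$. The main obstacle — and the only place requiring care — is the bookkeeping that keeps everything inside the $3$--neighborhood while still flowing long enough for the curvature pinching: this is why one first convexifies by a tiny amount (so the initial curvature data is controlled by an absolute constant, not by the bending of $\calY_t$) and then flows by a \emph{fixed} time that is an absolute constant, rather than flowing directly from the bent surface $\calY_t$ for an amount that would have to grow with the diameter of the bending lamination. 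Everything else is the standard Riccati-equation description of how second fundamental forms evolve under the geodesic normal flow in a hyperbolic manifold, together with the elementary fact that $\tanh r \to 1$ and $\coth r \to 1$.
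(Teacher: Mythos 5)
Your proposal is correct and is essentially the paper's argument: take a smooth convex surface very close to $\calY_t$ (the paper suggests smoothing the distance function, or using Labourie's constant-curvature surfaces) and flow it outward a fixed distance $2$, so that the result lies within the $3$--neighborhood and has principal curvatures in $[\tanh 2,\coth 2]\subset(\tfrac34,\tfrac54)$. The only simplification worth noting is that your worries about curvature bounds on the convexified surface are unnecessary: by the Riccati equation, flowing \emph{any} convex surface a distance $r$ pins all principal curvatures into $[\tanh r,\coth r]$ independently of the initial curvature data, which is exactly the fact the paper invokes.
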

\begin{proof}
Observe that there is a smooth convex surface arbitrarily close to $\calY_t$. One can construct such a surface in several ways. For example, one can smoothly approximate the distance function from $\calY_t$ by convex functions and take a level set. A more concrete construction is due to Labourie (\cite{Labourie.1991}) who showed that, for any $\kappa$ in $(0,-1)$, there is a surface ${\mathcal L}_\kappa$ of constant Gaussian curvature $\kappa$ in the $Y_t$--end of $\qf(X_t, Y_t)$. As $\kappa\to -1$ the surfaces $\mathcal L_\kappa$ will converge uniformly to $\calY_t$. If we flow any convex surface a distance $r$ in the normal direction then the curvatures are bounded between $\tanh r$ and $\coth r$. We then obtain $\calE_t$ by flowing the smooth convex surface near $\calY_t$ a distance $2$.\end{proof}

\begin{lemma}\label{first.time.lemma}
There is a time $t_2 > t_1$ depending only on $S$ and $\epsilon$ such
that, for all $t \geq t_2$, the surface $\calE_t'=\pi(\calE_t)$ lies in
$\core(M_t)\ssm \pi(B_t[\delta_2t/4])$.
\end{lemma}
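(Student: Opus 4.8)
The plan is to locate the proxy surface $\calE_t'=\pi(\calE_t)$ by controlling its distance, both from above and from below, to the convex hull boundary $\calX_t'=\pi(\calX_t)$ in $M_t$, and then comparing this to the depth of the collar $\pi(B_t[\delta_2t/4])$. First I would observe that by Lemma \ref{nearby.proxy.lemma} the surface $\calE_t$ lies within the $3$--neighborhood of $\calY_t$ in $\qf(X_t,Y_t)$. By Lemma \ref{QF.collar.lemma} the product region $B_t\cong S\times[0,1]$ has $\calX_t=S\times\{0\}$ and its far boundary at distance $\ge\delta_2 t$ from $\calX_t$; since $\calY_t$ is the \emph{other} component of $\partial\core(X_t,Y_t)$, it is separated from $\calX_t$ by $B_t$, and hence $d(\calX_t,\calY_t)\ge\delta_2 t$. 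Therefore, for $t$ large enough that $\delta_2 t - 3 > \delta_2 t/2$, the surface $\calE_t$ lies outside the $(\delta_2 t/2)$--neighborhood of $\calX_t$ in $\core(X_t,Y_t)$, and in particular outside $B_t[\delta_2 t/4]$ (which by its defining property is contained in $\calN_{\delta_2 t/4 + 2D}(\calX)$, hence in $\calN_{\delta_2 t/2}(\calX)$ once $\delta_2 t/4 > 2D$).

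Next I would push this downstairs through the covering map $\pi\colon\qf(X_t,Y_t)\to M_t$. From the proof of Lemma \ref{thick.collar.lemma}, for $s<\delta_2 t/2$ the restriction of $\pi$ to the $s$--neighborhood $C_s$ of $\calX_t$ in $\core(X_t,Y_t)$ is an embedding, and moreover any loop based near $\pi(B_t[\delta_2 t/3])$ of bounded length lifts; the same argument shows that distances near $\calX_t$ are preserved: for a point $x\in C_s$ with $s<\delta_2 t/2$ one has $d_{M_t}(\pi(x),\calX_t') = d_{\qf}(x,\calX_t)$. Applying this with the region $\pi(B_t[\delta_2 t/4])\subset\pi(C_{\delta_2 t/2})$, we conclude that $\pi(B_t[\delta_2 t/4])$ is an embedded collar of $\calX_t'$ in $\core(M_t)$ containing $\calN_{\delta_2 t/4}(\calX_t')$, and hence that any point of $\core(M_t)$ at distance $>\delta_2 t/4 + 2D$ from $\calX_t'$ lies outside it. It therefore suffices to show $d_{M_t}(\calE_t',\calX_t') > \delta_2 t/4 + 2D$ for $t\ge t_2$; and since $\calE_t$ is at distance $\ge \delta_2 t - 3$ from $\calX_t$ in $\qf(X_t,Y_t)$, while $\pi$ cannot increase distances, we get $d_{M_t}(\calE_t',\calX_t')$ is at least... here is the subtlety: $\pi$ is distance non-increasing, so a lower bound upstairs does \emph{not} directly give a lower bound downstairs. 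The fix is to argue the other way: $\calE_t'$ cannot lie in $\pi(B_t[\delta_2 t/4])$ because $\calE_t$ is disjoint from the \emph{full} $(\delta_2 t/2)$--neighborhood $C_{\delta_2 t/2}$ of $\calX_t$ upstairs, and $\pi(B_t[\delta_2 t/4])\subset\pi(C_{\delta_2 t/2})$ with $\pi|_{C_{\delta_2 t/2}}$ an embedding whose image's $\pi$--preimage meets $\core(X_t,Y_t)$ only in $C_{\delta_2 t/2}$ (as in the proof of Lemma \ref{thick.collar.lemma}, using $\delta_2 t/2 < \delta_2 t$, the separation distance between components of $\pi^{-1}(\calX_t')$). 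So if $\pi(\calE_t)$ met $\pi(B_t[\delta_2 t/4])$, then $\calE_t$ would meet $\pi^{-1}(\pi(B_t[\delta_2 t/4]))$, which inside $\core(X_t,Y_t)$ equals $B_t[\delta_2 t/4]\subset C_{\delta_2 t/2}$ — contradicting disjointness.

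Finally I would record that $\calE_t'\subset\core(M_t)$: indeed $\calE_t\subset\core(X_t,Y_t)\subset\pi^{-1}(\core(M_t))$ by the first observation in the proof of Lemma \ref{thick.collar.lemma}, so $\pi(\calE_t)\subset\core(M_t)$. Combining, $\calE_t'\subset\core(M_t)\ssm\pi(B_t[\delta_2 t/4])$ for all $t$ larger than some nice constant $t_2>t_1$ chosen to satisfy the finitely many inequalities used above ($\delta_2 t/2 > 3$, $\delta_2 t/4 > 2D$, and $t>t_1$ so that the embedding statements of Lemma \ref{thick.collar.lemma} apply). The main obstacle is the bookkeeping in the previous paragraph: the covering map only decreases distances, so the separation of $\calE_t'$ from the collar must be deduced from the disjointness upstairs together with the injectivity of $\pi$ on a large enough neighborhood of $\calX_t$, rather than from any naive distance estimate downstairs.
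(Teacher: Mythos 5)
There is a genuine gap, and it occurs exactly at the point you flagged as ``the subtlety.'' First, a factual error: your argument rests on the claim that $\calE_t\subset\core(X_t,Y_t)$, but by Lemma \ref{nearby.proxy.lemma} the surface $\calE_t$ lies in the $Y_t$--\emph{end} of $\qf(X_t,Y_t)$, outside the convex core --- it is produced by flowing a convex surface near $\calY_t$ a distance $2$ in the normal direction toward the end. So your final paragraph (deducing $\calE_t'\subset\core(M_t)$ from $\calE_t\subset\core(X_t,Y_t)$) fails, and, worse, your key step --- ``if $\pi(\calE_t)$ met $\pi(B_t[\delta_2t/4])$ then $\calE_t$ would meet $\pi^{-1}(\pi(B_t[\delta_2 t/4]))$, which \emph{inside $\core(X_t,Y_t)$} equals $B_t[\delta_2t/4]$'' --- says nothing about $\calE_t$, since $\calE_t$ is not in $\core(X_t,Y_t)$ at all.

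The deeper problem is that the statement you invoke about the preimage is not justified by Lemma \ref{thick.collar.lemma} and is essentially equivalent to the lemma being proved. The separation estimates in that proof show that components of $\pi^{-1}(\calX_t')$ other than $\calX_t$ are far from $\calX_t$; they say nothing about how close such components, or other components of $\pi^{-1}(\pi(B_t[\delta_2t/4]))$, come to $\calY_t$. A priori another sheet of the covering could pass right through the $Y_t$--end near $\calY_t$ (or through the part of $\core(X_t,Y_t)$ near $\calY_t$), in which case $\pi(\calE_t)$ would meet $\pi(B_t[\delta_2t/4])$ even though $\calE_t$ is far from $B_t[\delta_2t/4]$ itself; ruling this out is precisely the content of the lemma. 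Your correct observation that $d(\calX_t,\calY_t)\ge\delta_2 t$ upstairs controls only the principal sheet. The paper excludes the other sheets by a topological argument: if $\calY_t'$ were trapped in $\pi(B_t[\delta_2t/3])$, then $\calY_t$ would lie in a component of the preimage, but all components other than $B_t[\delta_2t/3]$ are simply connected because $M$ is acylindrical, so none can contain the incompressible surface $\calY_t$; combining this with the uniform diameter bound on $\calY_t$ coming from Theorem \ref{only.nonperipheral.thin} then pushes $\calY_t'$, and hence $\calE_t'$ (which lies within distance $3$ of it), off $\pi(B_t[\delta_2t/4])$ for large $t$. That your proposal never uses acylindricity or the diameter bound is the telltale sign that the essential difficulty has been bypassed rather than resolved.
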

\begin{proof}
The surface $\calY_t$ is $\ep'$--thick by Theorem
\ref{only.nonperipheral.thin} and so the diameter of $\calY_t$ is bounded by a nice constant.

Since the covering map $\pi$ maps $\core(X_t,Y_t)$ into $\core(M_t)$,
the image $\calY_t'$ of $\calY_t$ lies in $\core(M_t)$.
If $\calY_t'$ lies entirely in $\pi(B_t[\delta_2t/3])$
then $\calY_t$
lies in a component of $\pi^{-1}(\pi(B_t[\delta_2t/3]))$, and as in Lemma
\ref{thick.collar.lemma} these components are retracts of the lifts of
$\calX'_t$. All of them except $B_t[\delta_2t/3]$ itself are simply
connected (since $M$ is acylindrical) and
thus cannot contain $\calY_t$.
Furthermore, $B_t[\delta_2t/3]$ cannot contain $\calY_t$
since it is inside $\core(X_t,Y_t)$. 

We conclude that $\calY_t'$ cannot lie in $B_t[\delta_2t/3]$. 
So, if $t$ is sufficiently large (depending on the diameter bound for $\calY_t$), then $\calY_t'$ will be disjoint from $\pi(B_t[\delta_2t/4])$.

Since $\calE'_t$ is in a $3$--neighborhood of $\calY_t'$, it is also disjoint
from $\pi(B_t[\delta_2t/4])$ when $t$ is large enough.
\end{proof}

\subsubsection*{Horocylically convex surfaces and their conformal structures}
Let $\Sigma$ be a transversally oriented surface
immersed in a hyperbolic $3$--manifold $(M,g)$. This gives a normal vector field ${\mathbf n}$ to
$\Sigma$, and a shape operator $B:T\Sigma \to T\Sigma$ given by
$B(x) = \nabla_{x} {\mathbf n}$. If the eigenvalues of $B$ lie in
$(-1,\infty)$ (i.e. the principal curvatures are bigger than $-1$), then $\Sigma$ is {\em horocyclically convex} and the geodesic flow to infinity along $\nn$ gives a complex
structure $\omega$ on $\Sigma$ (in fact a complex projective
structure). More precisely let $\Sigma_r$ be the surface obtained by flowing $\Sigma$ in the direction of $\nn$ a distance $r$.  The condition that $\Sigma$ is horocylically convex is equivalent to this normal flow being non-singular for all $r\ge 0$. 
If we pull the metrics on $\Sigma_r$ back to $\Sigma$ by the normal flow, the metrics diverge, but the conformal structures converge to a conformal structure $Y_\Sigma$.

 In our setting, $\Sigma$ is the locally convex surface $\calE'_t$ and the conformal structure is the skinning surface $Y_t$.

A $1$--parameter family of hyperbolic metrics on $M$ determines a $1$--parameter family of conformal structures on $\Sigma$. We want to convert bounds on the derivative of the metric to bounds on the derivative of the conformal structures in Teichm\"uller space. The key to this is the following formula which gives the conformal structure $Y_\Sigma$ in terms of the geometry of $\Sigma$.

\begin{lemma}[Krasnov--Schlenker \cite{KS08}]\label{conformal.structure.infty}
Let $\Sigma$ be a horocyclically convex surface in a hyperbolic $3$--manifold $(M,g)$ with first fundamental form $I = g|_\Sigma$ and shape operator $B$. Then $I^*(x,y)  = I(x+Bx,y+By)$ is a Riemannian metric on $\Sigma$ in the conformal class $Y_\Sigma$.
\end{lemma}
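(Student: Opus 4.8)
The plan is to compute the first fundamental forms of the surfaces $\Sigma_r$ obtained by flowing $\Sigma$ a distance $r$ in the direction of $\nn$, rescale them, and identify the limit with $I^*$; since $Y_\Sigma$ is by definition the limit of the conformal classes of the $\Sigma_r$, this is exactly what is needed.

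\textbf{Step 1: evolution of the first fundamental form under the normal flow.} Fix $p\in\Sigma$ and let $\gamma$ be the unit--speed geodesic with $\gamma(0)=p$ and $\dot\gamma(0)=\nn_p$. For $x\in T_p\Sigma$, varying $\gamma$ through the normal geodesics along a curve in $\Sigma$ tangent to $x$ produces a Jacobi field $J$ along $\gamma$ with $J(0)=x$ and $\frac{D}{dr}J(0)=\nabla_x\nn=Bx$. Since $B$ takes values in $T\Sigma$, the field $J$ stays orthogonal to $\dot\gamma$, and in constant curvature $-1$ the Jacobi equation becomes $J''=J$; hence $J(r)=P_r\big((\cosh r\,\mathrm{Id}+\sinh r\,B)x\big)$, where $P_r$ denotes parallel transport along $\gamma$ from $0$ to $r$. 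As $P_r$ is an isometry, the pullback to $\Sigma$ of the first fundamental form of $\Sigma_r$ is
\[
I_r(x,y)=I\big((\cosh r\,\mathrm{Id}+\sinh r\,B)x,\ (\cosh r\,\mathrm{Id}+\sinh r\,B)y\big).
\]
This computation is purely local along each normal geodesic, so it is valid in any hyperbolic $3$--manifold via the developing map; moreover horocyclic convexity is exactly the statement that $\cosh r\,\mathrm{Id}+\sinh r\,B$ is invertible for all $r\ge 0$, equivalently that the eigenvalues of the $I$--self-adjoint operator $B$ exceed $-1$, which makes $\mathrm{Id}+B$ positive definite and hence $I^*(x,y)=I((\mathrm{Id}+B)x,(\mathrm{Id}+B)y)$ a genuine Riemannian metric.

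\textbf{Step 2: rescale and take the limit.} Writing $\cosh r\,\mathrm{Id}+\sinh r\,B=\tfrac12 e^{r}(\mathrm{Id}+B)+\tfrac12 e^{-r}(\mathrm{Id}-B)$, we get $e^{-r}(\cosh r\,\mathrm{Id}+\sinh r\,B)\to\tfrac12(\mathrm{Id}+B)$ as $r\to\infty$, pointwise and uniformly with all derivatives on compact subsets of $\Sigma$. Therefore $e^{-2r}I_r\to\tfrac14 I^*$ smoothly on compact sets. Rescaling a metric does not change its conformal class, so the conformal structure of $\Sigma_r$ pulled back to $\Sigma$ is $[e^{-2r}I_r]$, which converges to $[\tfrac14 I^*]=[I^*]$.

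\textbf{Step 3: conclude.} By the definition of $Y_\Sigma$ recalled just before the lemma, $Y_\Sigma$ is precisely the limit of the conformal structures of the surfaces $\Sigma_r$; combining with Step 2 gives $Y_\Sigma=[I^*]$, i.e. $I^*$ lies in the conformal class $Y_\Sigma$. The only delicate point is Step 1: one must fix the sign conventions for $\nn$ and $B$ consistently so that the flow in the $\nn$--direction is the flow "toward infinity" and the Jacobi initial data read $J(0)=x$, $J'(0)=Bx$; and one should note that although the normal flow need not be globally injective when $\Sigma$ is merely immersed, the whole argument runs along individual normal geodesics and only uses that horocyclic convexity keeps the flow nonsingular for all $r\ge 0$. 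Everything else reduces to the elementary equation $J''=J$ and an algebraic limit.
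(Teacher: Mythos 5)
Your proof is correct. Note that the paper offers no proof of this lemma at all --- it is quoted from Krasnov--Schlenker \cite{KS08} --- and your Jacobi-field computation ($J''=J$ along the normal geodesics, giving $I_r(x,y)=I((\cosh r\,\mathrm{Id}+\sinh r\,B)x,(\cosh r\,\mathrm{Id}+\sinh r\,B)y)$, then rescaling by $e^{-2r}$ and letting $r\to\infty$) is exactly the standard derivation of the metric at infinity in that source, and it correctly uses the paper's definition of $Y_\Sigma$ as the limit of the pulled-back conformal structures of the equidistant surfaces $\Sigma_r$. The only quibble is cosmetic: nonsingularity of the flow for all $r\ge 0$ is equivalent to the eigenvalues of $B$ being at least $-1$ (an eigenvalue equal to $-1$ gives $\cosh r-\sinh r=e^{-r}>0$), but the paper's definition of horocyclic convexity imposes the strict inequality, which is what you actually use to make $\mathrm{Id}+B$ invertible and $I^*$ a genuine metric, so nothing in your argument is affected.
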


If $g_t$ is a smooth family of complete hyperbolic metrics on $M$ we obtain a
family of shape operators $B_t$ and conformal structures $\omega_t$. At
each $t$ we have a strain field $\eta_t$ defined as before. We wish to
control the speed of $\omega_t$ in $\Teich(\Sigma)$ in terms of the
behavior of $B_t$ and $\eta_t$.

\begin{proposition}\label{control.skinning}
Let $(M, g_t)$ be a manifold with a smooth family $g_t$ of complete
hyperbolic metrics. Let $\Sigma$ be a closed immersed transversally
oriented surface in $M$ and let $\omega_t$, $B_t$ and $\eta_t$ be the
conformal structure, shape
operator and strain field for $g_t$, respectively. Given $k$ there exists $C$ such
that, if the eigenvalues of $B_0$ lie in $[-1+1/k,k]$, then
 \[
  \|\bdot\omega\|_\Teich < C\, \max\left(
  \|\eta\|_{g},
\|\grad \eta\|_{g}\right).
\]
\end{proposition}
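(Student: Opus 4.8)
The plan is to differentiate the Krasnov--Schlenker formula of Lemma~\ref{conformal.structure.infty} at $t=0$ and feed the result into Lemma~\ref{bound.Teich.norm}. Thus I would work entirely pointwise on $\Sigma$ and estimate the traceless part of $\bdot{(I^*)}$ relative to the reference metric $I^* = I_0^*$, since Lemma~\ref{bound.Teich.norm} gives $\|\bdot\omega\|_\Teich \le 2\|\bdot{(I^*)}\|$ once we know $I_t^* = I_t(\cdot + B_t\cdot,\ \cdot + B_t\cdot)$ is a smooth family of metrics in the classes $\omega_t$. The key identity is
\[
\frac{\partial}{\partial t}\, I^*_t(x,y) = \bdot I\bigl((1+B_0)x,(1+B_0)y\bigr) + I_0\bigl(\bdot B x,(1+B_0)y\bigr) + I_0\bigl((1+B_0)x,\bdot B y\bigr),
\]
so the whole problem reduces to bounding $\|\bdot I\|$ and $\|\bdot B\|$ in terms of $\|\eta\|_g$ and $\|\grad\eta\|_g$, with the bounded-curvature hypothesis $B_0\in[-1+1/k,k]$ controlling the operator norm of $(1+B_0)$ from above and (crucially) controlling $\|(1+B_0)^{-1}\|$ so that the \emph{conformal} distortion of $I^*_t$ stays comparable to that of the ambient strain.

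The two ingredients are then: (i) $\bdot I = (\bdot g)|_{T\Sigma} = 2\,\eta|_{T\Sigma}$ by the very definition of the strain field, so $\|\bdot I\| \le 2\|\eta\|_g$ immediately; and (ii) a formula for $\bdot B$. For the latter I would recall $B_t(x) = \nabla^t_x \nn_t$, where $\nn_t$ is the $g_t$--unit normal to the \emph{fixed} immersed surface $\Sigma$. Differentiating in $t$ produces three kinds of terms: the variation of the Levi-Civita connection, which is the standard first-order expression in $\grad g_t = $ (covariant derivative of) $\eta_t$, hence controlled by $\|\grad\eta\|_g$; the variation of $\nn_t$, which one computes by writing $\nn_t$ in terms of $g_t$ and the fixed tangent plane and differentiating the normalization $g_t(\nn_t,\nn_t)=1$ and orthogonality $g_t(\nn_t, T\Sigma)=0$ — this contributes terms in $\|\eta\|_g$ and $\|\grad\eta\|_g$; and lower-order contractions with $B_0$ itself, bounded using $\|B_0\|\le k$. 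Assembling, $\|\bdot B\| \le C'(k)\max(\|\eta\|_g,\|\grad\eta\|_g)$ for a constant depending only on $k$, and then the displayed derivative of $I^*_t$ obeys $\|\bdot{(I^*)}\| \le C''(k)\max(\|\eta\|_g,\|\grad\eta\|_g)$.

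Finally, to legitimately apply Lemma~\ref{bound.Teich.norm} I need that $t\mapsto I^*_t$ is a smooth family of Riemannian metrics realizing the classes $\omega_t$: smoothness is clear since $g_t$ is smooth and, by the eigenvalue hypothesis at $t=0$ together with continuity, $1+B_t$ remains invertible for small $t$ so $I^*_t$ stays positive-definite; and membership in the class $\omega_t$ is exactly Lemma~\ref{conformal.structure.infty} applied at each $t$. Then $\|\bdot\omega\|_\Teich \le 2\|\bdot{(I^*)}\| < C\max(\|\eta\|_g,\|\grad\eta\|_g)$ with $C = C(k)$, which is the claim. The main obstacle, and the one step that needs genuine care rather than bookkeeping, is the computation of $\bdot B$: one must correctly account for both the connection variation (the term producing $\grad\eta$) and the normal-vector variation, and check that no term survives that is \emph{not} controlled by $\|\eta\|_g$ or $\|\grad\eta\|_g$ — in particular that the dependence on $B_0$ enters only multiplicatively through its bounded operator norm and the bounded norm of $(1+B_0)^{-1}$, so that the final constant depends on $k$ alone and not on $\Sigma$ or $M$.
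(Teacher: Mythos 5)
Your proposal follows essentially the same route as the paper: differentiate the Krasnov--Schlenker metric $I^*_t = I_t(\cdot+B_t\cdot,\cdot+B_t\cdot)$, reduce to Lemma~\ref{bound.Teich.norm}, and bound $\bdot B$ by splitting it into the variation of the connection (controlled by $\|\grad\eta\|$, via $\|\bdot\nabla\|\le 3\|\nabla\eta\|$ in the paper) and the variation of the unit normal (controlled by $\|\eta\|$ and $\|\grad\eta\|$), with $\|B_0\|$ entering only multiplicatively. Your extra remarks on the invertibility of $1+B_t$ and on comparing the $I^*$-norm to the $g$-norm via $\|(1+B_0)^{-1}\|$ are exactly where the paper's constant picks up its $k$-dependence, so the proposal is correct and matches the paper's argument.
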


\begin{proof}
Let $x,y$ and $z$ be tangent vector fields on $\Sigma$.
Differentiating the formula from Lemma \ref{conformal.structure.infty} we have
\begin{equation}\label{I.star.dot}
\bdot{I^*}(x,y) = 2I(\eta(x+Bx), y+By) + I(\bdot Bx ,y+By) +
I(x+Bx,\bdot By).
\end{equation}
From Lemma \ref{bound.Teich.norm} we see that a bound on $\|\bdot{I^*}\|$
for all points in $\Sigma$ gives a bound on $\|\bdot \omega\|_\Teich$.  
From (\ref{I.star.dot}) we have that given a bound on $\|B\|$, $\|\bdot{I^*}\|$ is bounded by a linear function of $\|\eta\|$ and $\|\bdot B\|$.
If $\nabla^t$ is the Riemannian connection for $g_t$ and $\nn_t$ is the
unit normal outward vector field for $(\Sigma, g_t)$, then $B_t x =
\nabla^t_x \nn_t$.
Therefore $\bdot B x = \nabla_x \bdot \nn + \bdot\nabla_x \nn$, and so we need to control $\nabla \bdot \nn$ and $\bdot\nabla$.

Given a vector $\vv$ at a point in $\Sigma$, we let $\vv\tangent$ be the component of $\vv$ tangent to $\Sigma$.

First consider $\nabla\bdot \nn$. 
We only need to bound $\nabla\bdot \nn \tangent$  as we are taking the inner
product against tangent vectors. 

We begin by differentiating the
formula $g_t(\nn_t,y) = 0$ to see that 
\[
2g(\eta \nn,y) +g(\bdot \nn,y)=0.
\]
Note that this implies that $g(2\eta \nn + \bdot \nn, y) = 0$ and so $2\eta \nn+\bdot \nn$ is orthogonal to $\Sigma$. We will use this later. 
Differentiating in the $x$--direction we have
\begin{eqnarray*}
0 &=&
x(2g(\eta \nn,y) +g(\bdot \nn,y)) \\& = & 2g(\nabla_x(\eta \nn),y) + 2g(\eta \nn,\nabla_xy) + g(\nabla_x\bdot \nn,y)+ g(\bdot \nn,\nabla_xy)\\
&=& 2g((\nabla_x\eta)\nn, y) +2g(\eta(Bx),y) + g(2\eta \nn+\bdot \nn, \nabla_xy)+g(\nabla_x\bdot \nn,y).
\end{eqnarray*}
As $2\eta \nn + \bdot \nn$ is normal we only need to know the normal component of $\nabla_xy$. 
Since $g(\nabla_xy,\nn) + g(y, \nabla_x\nn) = xg(y,\nn) = 0$, we have $g(\nabla_xy, \nn) = -g(y, Bx)$ and so
\[
|g(2\eta \nn + \bdot \nn, \nabla_xy)| = \|2\eta \nn + \bdot \nn\||g(y,Bx)|.
\]
Combining we have
\[
\|\nabla_x\bdot\nn \tangent \| \le 2\|\nabla\eta\| + 4\|\eta\|\|B\| + \|\bdot \nn\|\|B\|.
\]

We now bound $\bdot \nn$. 
Let $x$ be a unit vector in the direction $\bdot \nn\tangent$. 
Differentiating the formula $g_t(x, \nn_t) = 0$, we have
\[
2g(\eta x, \nn) + g(x,\bdot \nn) = 0
\]
and so $|g(x, \bdot \nn)| \le 2\|\eta\|$.
Differentiating $g_t(\nn_t, \nn_t) = 1$, we see that
\[
2g(\eta \nn, \nn) + 2g(\bdot \nn, \nn) = 0
\]
and so $|g(\bdot \nn,\nn)| \le \|\eta\|$. Therefore $\|\bdot \nn\| \le 3\|\eta\|$.

To bound $\bdot\nabla$ we differentiate the formula $xg_t(y,z) = g_t(\nabla^t_x y, z) + g_t(y, \nabla^t_x z)$. The left hand side is
\[
2xg(\eta y, z) = 2\left(g(\nabla_x (\eta y), z\right)+ g\left(\eta y, \nabla_x z)\right)
\]
and the right hand side is 
\[
2g\left(\eta(\nabla_xy),z\right) + g(\bdot\nabla_x y,z) +2g(\eta y, \nabla_xz) + g(y,\bdot\nabla_xz).
\]
Rearranging and applying the Leibnitz rule to $\nabla_x(\eta y)$, this becomes
\begin{equation}\label{derivativeofstrain}
2g((\nabla_x\eta)y,z) =g(\bdot\nabla_xy,z) + g(y, \bdot\nabla_xz).
\end{equation}
As the Riemannian connections are torsion free we have 
\[
\nabla^t_xy - \nabla^t_yx = [x,y]
\]
and differentiating we see that $\bdot\nabla_x y = \bdot\nabla_yx$. 
Taking the three permutations of \eqref{derivativeofstrain}, the symmetry of $\bdot\nabla$ gives
\[
g(\bdot\nabla_xy,z) = g((\nabla_x\eta)y,z) + g((\nabla_y\eta)z,x) - g((\nabla_z\eta) x,y),
\]
and so $\|\bdot\nabla\| \le 3\|\nabla\eta\|$.

Combing the bounds on $\|\nabla \bdot \nn\|$ and $\|\bdot\nabla\|$ we have
\begin{eqnarray*}
\|\bdot B\| &\le& 2\|\nabla\eta\|+4\|\eta\|\|B\| + 3\|\eta\|\|B\| + 3\|\nabla\eta\|\\
& \le & 5\|\nabla\eta\| + 7\|\eta\|\|B\|.
\end{eqnarray*}
\end{proof}

\section{Finishing the proof}
Let $M$ be a hyperbolizable acylindrical $3$--manifold and assume that
$X$ is the conformal boundary of the unique hyperbolic structure on
$M$ whose convex core boundary is totally geodesic. Let $X_t$ be an
$\epsilon$--thick Teichm\"uller geodesic ray in $\Teich(\partial M)$
with $X = X_0$. Let $M_t = (M^\circ, g_t)$ be the hyperbolic metrics given
by Theorem \ref{eta.C1.control} with conformal boundary $X_t$ and let $Y_t
=\overline{\sigma_M(X_t)}$ be the skinning surface with its orientation reversed.
 By Lemma \ref{nearby.proxy.lemma}, there are convex surfaces $\calE_t$ in $\qf(X_t,Y_t)$ 
 with curvatures within $\frac14$ of $1$ and whose conformal structures at
infinity are $Y_t$.
By Lemma \ref{first.time.lemma}, the image $\calE'_t$ of $\calE_t$ in
$M_t$ is contained in $\core(M_t)$ and $d(\calE'_t,
M_t-\core(M_t)) \ge \delta_2 t$. 
By Theorem \ref{eta.C1.control} 
we have
 \[
 \|\eta_t(x)\| \le Ae^{-B\delta_2 t} \quad \mathrm{and} \quad \|\nabla^t \eta_t (x)\| \le Ae^{-B\delta_2 t}
 \]
 for any $x$ in $\calE'_t$.
 By Proposition \ref{control.skinning} we have
 \begin{equation}\label{final.estimate}
 \|\dot Y_t\|_\Teich < ACe^{-B\delta_2 t}.
 \end{equation}
 All of these constants are nice, and Theorem \ref{BoundAlongThickRays.theorem} follows by integrating \eqref{final.estimate}.

\bibliographystyle{plain}
\bibliography{Skin_thick_rays_final.bib}

\bigskip

{
\footnotesize

\noindent 
\textsc{University of Utah}

\medskip

\noindent 
\textsc{University of Wisconsin -- Madison}

\medskip

\noindent 
\textsc{Yale University}

}

\end{document}